\tikzstyle{vertex}=[circle,draw=black,fill=black,inner sep=0,minimum size=3pt,text=white,font=\footnotesize]
\date{}
\title{\vspace{-1.2cm} Ramsey properties of semilinear graphs}
\author{
Istv\'an Tomon\thanks{ETH Zurich, \emph{e-mail}: \textbf{istvan.tomon@math.ethz.ch}}
}
\theoremstyle{plain}
\newtheorem{theorem}{Theorem}[section]
\newtheorem{lemma}[theorem]{Lemma}
\Crefname{theorem}{Theorem}{Theorems}
\Crefname{definition}{Definition}{Definitions}
\Crefname{corollary}{Corollary}{Corollaries}
\Crefname{claim}{Claim}{Claims}
\Crefname{lemma}{Lemma}{Lemmas}
\Crefname{conjecture}{Conjecture}{Conjectures}
\Crefname{problem}{Problem}{Problems}
\Crefname{prop}{Proposition}{Propositions}
\theoremstyle{definition}
\DeclareMathOperator{\polylog}{polylog}
\newcommand{\m}{\mathbf}
\begin{document}

\maketitle
\sloppy

\begin{abstract}
A graph $G$ is \emph{semilinear of complexity $t$} if the vertices of $G$ are elements of $\mathbb{R}^{d}$ for some $d\in\mathbb{Z}^{+}$, and the edges of $G$ are defined by the sign patterns of $t$ linear functions $f_1,\dots,f_t:\mathbb{R}^{d}\times \mathbb{R}^{d}\rightarrow\mathbb{R}$. We show that semilinear graphs of constant complexity have very tame Ramsey properties. More precisely, we prove that if $G$ is a semilinear graph of complexity $t$ which contains no clique of size $s$ and no independent set of size $n$, then $G$ has at most $O_{s,t}(n)\cdot(\log n)^{O_t(1)}$ vertices. We also show that the logarithmic term cannot be omitted. In particular, this implies that if $G$ is a semilinear graph of constant complexity on $n$ vertices, and $G$ contains no clique of size $s$, then $G$ can be properly colored with $\polylog(n)$ colors. In the past 60 years, this coloring question was extensively studied for several special instances of semilinear graphs, e.g. shift graphs, intersection and disjointness graphs of certain geometric objects, and overlap graphs. Our main result provides a general upper bound on the chromatic number of all such, seemingly unrelated, graphs.

Furthermore, we consider the symmetric Ramsey problem for semilinear graphs as well. It is known that if there exists an intersection graph of $N$ boxes in $\mathbb{R}^{d}$ (such graphs are semilinear of complexity $2d$) that contains no clique or independent set of size $n$, then $N=O_d(n^2(\log n)^{d-1})$. That is, the exponent of $n$ does not grow with the dimension. We prove a result about the symmetric Ramsey properties of semilinear graphs, which puts this phenomenon in a more general context.
\end{abstract}

\section{Introduction}
By Ramsey's theorem, if $s$ and $n$ are positive integers, then there exists a smallest number $N=R(s,n)$ such that any graph on $N$ vertices contains either a clique of size $s$, or an independent set of size $n$. Estimating the numbers $R(s,n)$ is one of the central problems of graph theory, due to its wide applications in geometry, computer science, logic and number theory. In the \emph{asymmetric Ramsey problem}, we are interested in the behavior of $R(s,n)$ as a function of $n$, while we think of $s$ as a constant. By the celebrated result of Erd\H{o}s and Szekeres \cite{ESz35} from 1935, we have $R(s,n)\leq \binom{s+n-2}{s-1}<n^{s-1}$, and currently the best known bounds are $n^{\frac{s+1}{2}-o(1)}<R(s,n)<n^{s-1-o(1)}$, see \cite{BK10} and \cite{AKSz80}. In the \emph{symmetric Ramsey problem}, we are interested in the value of $R(n,n)$. The above mentioned result of Erd\H{o}s and Szekeres \cite{ESz35} implies that $R(n,n)< 4^n$, while the celebrated probabilistic argument of Erd\H{o}s \cite{E47} shows that $R(n,n)=\Omega(2^{n/2})$. These bounds have been only slightly improved since.

However, if we restrict our attention to graphs that arise from geometric settings, these bounds can be greatly improved. There is a huge literature studying the phenomenon that for certain geometrically defined graphs $G$, if $G$ contains no clique of size $s$ and no independent set of size $n$, then it has at most linear, or almost linear number of vertices in $n$. These include intersection and disjointness graphs of certain geometric objects, overlap graphs, and shift graphs. We provide a detailed discussion of such results in Section \ref{sect:asym}. In this paper, we study the Ramsey properties of a general family of graphs, called semilinear graphs, that collects many geometrically defined graphs for which the above phenomenon has been observed. Our main result provides an explanation for the tame Ramsey properties of these graph families.

Semilinear graphs and hypergraphs were recently introduced by Basit, Chernikov, Starchenko, Tao and Tran \cite{BCSTT20}, who studied Zarankiewicz's problem  (also known as extremal, or Tur\'an-type problems) in these families. Let us provide two definitions, which are (more or less) interchangeable. A function $f:\mathbb{R}^{n}\rightarrow \mathbb{R}$ is \emph{linear} if there exist $n+1$ real numbers $a_1,\dots,a_n,b$ such that $$f(\m{x})=b+\sum_{i=1}^{n}a_{i}\m{x}(i).$$ 
Given a positive integer $t$, say that a graph $G$ is \emph{semilinear of complexity $t$}, if $V(G)\subset \mathbb{R}^{d}$ for some positive integer $d$, and there exist $t$ linear functions $f_{1},\dots,f_t:\mathbb{R}^{d}\times\mathbb{R}^d\rightarrow \mathbb{R}$, and a Boolean function $\phi:\{\mbox{F},\mbox{T}\}^{3t}\rightarrow \{\mbox{F},\mbox{T}\}$ (where F and T denote false and true, respectively) such that for $\m{x},\m{y}\in V(G)$, $\{\m{x},\m{y}\}$ is an edge of $G$ if and only if 
\begin{equation}\label{equ:def1}
\phi(\{f_{i}(\m{x},\m{y})<0,f_{i}(\m{x},\m{y})\leq 0,f_{i}(\m{x},\m{y})=0\}_{i\in [t]})=\mbox{T}.
\end{equation}
In other words, whether $\{\m{x},\m{y}\}$ is an edge of $G$ is determined by the \emph{sign-pattern} of $(f_1(\m{x},\m{y}),\dots,f_t(\m{x},\m{y}))$. Also, say that $G$ is \emph{semilinear*} of complexity $(t,u)$ if there exist $t\cdot u$ linear functions $f_{i,j}:\mathbb{R}^{d}\times\mathbb{R}^d\rightarrow \mathbb{R}$ for $(i,j)\in [u]\times [t]$ such that for $\m{x},\m{y}\in V(G)$, $\{\m{x},\m{y}\}$ is an edge of $G$ if and only if \begin{equation}\label{equ:def2}\bigvee_{i\in [u]}\left(\bigwedge_{j\in [t]}\{f_{i,j}(\m{x},\m{y})<0\}\right)=\mbox{T}.\end{equation}
We assume that for every $\m{x},\m{y}\in V(G)$, the left hand side of (\ref{equ:def1}) and (\ref{equ:def2}) remains the same after switching $\m{x}$ and $\m{y}$, so the edges of $G$ are well defined. Later, we will show that these two definitions are equivalent up to the values of $t$ and $u$. We provide these two definitions, as in certain situations it is more convenient to work with one than the other.

\subsection{Asymmetric Ramsey properties of semilinear graphs}\label{sect:asym}

 In \cite{BCSTT20}, it was proved that if $G$ is a semilinear graph of complexity $t$ on $n$ vertices, and $G$ does not contain the complete bipartite graph $K_{k,k}$, then $G$ has at most $O_{t,k}(n)\cdot (\log n)^{O_t(1)}$ edges. That is, semilinear graphs have much tamer extremal properties than general graphs. We show an analogue of this result for Ramsey properties. Let $R_{t}(s,n)$ denote the smallest $N$ such that every semilinear graph of complexity $t$ on $N$ vertices contains either a clique of size $s$, or an independent set of size $n$. One of the main results of our paper is the following.

\begin{theorem}\label{thm:mainupper0}
Let $t$ be a positive integer, then there exist $\alpha,\beta,\gamma>0$ such that $$R_t(s,n)\leq \alpha s^{\beta} n\cdot (\log n)^{\gamma}.$$
\end{theorem}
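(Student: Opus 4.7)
My plan is to prove the theorem by induction on $s$, with the Zarankiewicz-type bound of Basit--Chernikov--Starchenko--Tao--Tran \cite{BCSTT20} serving as the engine for extracting large independent sets: any semilinear graph of complexity $t$ on $N$ vertices that omits $K_{k,k}$ has at most $O_{t,k}(N(\log N)^{c_t})$ edges, and hence, by a greedy argument, an independent set of size $\Omega_{t,k}(N/(\log N)^{c_t})$. Before beginning the induction I would verify that the two notions of semilinearity in (\ref{equ:def1}) and (\ref{equ:def2}) are interchangeable at the cost of a bounded blow-up in $t$: a disjunctive normal form expansion of $\phi$ reduces (\ref{equ:def1}) to (\ref{equ:def2}), and the converse direction is immediate. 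Semilinear graphs of complexity $t$ are also closed under induced subgraphs and under complementation, so freely passing to subgraphs is harmless throughout the argument.

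For the inductive step, let $G$ be semilinear of complexity $t$ on $N$ vertices with no $K_s$ and no independent set of size $n$, and set $R:=R_t(s-1,n)$, which by induction is at most $\alpha(s-1)^\beta n(\log n)^\gamma$. The core claim is that $G$ contains no $K_{R,R}$ subgraph. Indeed, in any copy of $K_{R,R}$ with sides $A,B$, the induced graph $G[A]$ is semilinear of complexity $t$ on $R$ vertices and is $K_{s-1}$-free, because any $(s-1)$-clique in $A$ would combine with any vertex of $B$ to produce a forbidden $K_s$. By the inductive definition of $R$, $G[A]$ must then contain an independent set of size $n$, contradicting the hypothesis on $G$. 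Applying the Zarankiewicz bound with parameter $k=R$ thus yields $|E(G)|\leq F(t,R)\cdot N(\log N)^{c_t}$, and a greedy independent-set extraction produces an independent set of size at least $N/\bigl(2F(t,R)(\log N)^{c_t}+1\bigr)$. Forcing this to be smaller than $n$ then bounds $N=R_t(s,n)$ in terms of $n$, $\log n$, and $R_t(s-1,n)$.

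The main obstacle is controlling the amplification of $n$ through the induction. If the Zarankiewicz constant $F(t,k)$ depends polynomially on $k$ (as it does in many semialgebraic Zarankiewicz bounds), the naive recursion $R_t(s,n)\lesssim n\cdot F(t,R_t(s-1,n))(\log n)^{c_t}$ inflates the exponent of $n$ at each step and yields only a polynomial Ramsey bound, not the near-linear one claimed. To retain the near-linear dependence on $n$, I expect one needs either (a) to run the greedy extraction atom-by-atom on the DNF form of the edge relation, so that each invocation of the Zarankiewicz bound sees a conjunctive graph whose complexity is controlled only by $t$, or (b) to combine the Zarankiewicz argument with a cutting-based recursion on the arrangement of hyperplanes defined by the linear functions $f_{i,j}$. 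In either variant the $(\log n)^\gamma$ factor in the conclusion should absorb the cost of the geometric recursion, while a polynomial $s^\beta$ factor accumulates over the $s$ inductive stages, matching the shape of the bound stated in the theorem.
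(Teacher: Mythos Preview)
Your proposal has a genuine gap, and you have correctly identified it yourself: the induction on $s$ via the Zarankiewicz bound of \cite{BCSTT20} loses a factor that depends on $R_t(s-1,n)$ at every step. Even if the dependence of the Zarankiewicz constant on the forbidden $K_{k,k}$ is linear in $k$, the recursion becomes $R_t(s,n)\lesssim_t n\cdot R_t(s-1,n)\cdot(\log n)^{c_t}$, which unwinds to $R_t(s,n)\lesssim_t n^{s-1}(\log n)^{(s-1)c_t}$. This is the Erd\H{o}s--Szekeres bound with a polylog decoration, not the near-linear bound claimed. Your suggested repairs (a) and (b) are not arguments: ``running the greedy extraction atom-by-atom on the DNF'' still faces the same amplification, since each atom is a semilinear graph whose $K_{k,k}$-freeness you can only certify with $k$ of order $R_t(s-1,n)$; and ``a cutting-based recursion on the hyperplane arrangement'' is a hope, not a lemma. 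Without a concrete mechanism that prevents the forbidden-$K_{k,k}$ parameter from scaling with $n$, the scheme cannot reach a linear-in-$n$ bound.

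The paper takes a completely different route: it proves the chromatic bound $\chi(G)\le \alpha s^{\beta}(\log N)^{\gamma}$ directly for any $K_s$-free semilinear $G$ on $N$ vertices (Theorem~\ref{thm:mainthm}), and then Theorem~\ref{thm:mainupper0} follows immediately since some color class has size at least $N/\chi(G)$. The chromatic bound is obtained by writing $G$ as a bounded union of \emph{quasi-comparability graphs} (each defined by $\mathbf{x}\prec\mathbf{y}'$ in $\mathbb{R}^t$), and then decomposing each such graph into $2^{O(t)}$ pieces, each of which is the intersection of a comparability graph with an intersection graph of boxes. The key engine is Lemma~\ref{lemma:intersection}, which bounds the chromatic number of such an intersection by $O_t(s(\log N)^{t})$ via a halving recursion on one box coordinate. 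Crucially, the clique number $s$ enters only once, through Mirsky's theorem applied to the comparability factor, so no $n$-dependent parameter is ever fed back into the structural bound. That is precisely the feature your inductive scheme lacks.
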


We also show that the polylogarithmic term cannot be omitted.

\begin{theorem}\label{thm:mainlower}
There exist $t$ and $c>0$ such that $$R_t(3,n)\geq cn\frac{\log n}{\log\log n.}$$
\end{theorem}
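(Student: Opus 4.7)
The plan is to construct, for some positive integer $t$ and some constant $c > 0$, a triangle-free semilinear graph $G$ of complexity $t$ with $|V(G)| = N \ge c n (\log n)/(\log \log n)$ and independence number $\alpha(G) \le n-1$; such a $G$ witnesses $R_t(3,n) > N$. The natural first candidate is the classical shift graph $H_m$ on the vertex set $\{(i,j) \in \mathbb{Z}^2 : 1 \le i < j \le m\} \subset \mathbb{R}^2$ with edges $(i,j) \sim (j,k)$ for $i < j < k$: this is semilinear of constant complexity (defined by a Boolean combination of the two linear equations $\m{x}(2) = \m{y}(1)$ and $\m{y}(2) = \m{x}(1)$ together with the appropriate sign conditions), triangle-free, and has $\chi(H_m) = \lceil \log_2 m \rceil$. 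Unfortunately $\alpha(H_m) = \Theta(m^2)$ (witnessed, for instance, by the set of pairs with $i$ odd and $j$ even, which is independent because any shift $(i,j) \sim (j,k)$ would require $j$ to be both even and odd), so by itself $H_m$ only yields the trivial bound $R_t(3,n) \ge n$.

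To obtain the stated improvement, I would modify the shift graph so that large independent sets are combinatorially forbidden, while keeping the graph semilinear of constant complexity and triangle-free. A natural family of candidates is the iterated or $d$-shift graph, whose vertices are $d$-element subsets $\{a_1 < \cdots < a_d\}$ of $[m]$ (placed in $\mathbb{R}^d$) and whose edges are shifted pairs $\{a_1 < \cdots < a_d\} \sim \{a_2 < \cdots < a_{d+1}\}$. For any fixed $d$ this is semilinear of complexity $O(d)$ and triangle-free, and by a classical argument of Erd\H{o}s and Hajnal its chromatic number grows like the $(d-1)$-fold iterated logarithm of $m$. Alternatively, one may blow up $H_m$ by a small semilinear ``gadget'' and add cross-edges according to a shift-like rule, or couple the vertices of $H_m$ with a secondary coordinate so that adjacency requires consistency in both the primary (shift) and secondary relations. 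In all these variants the complexity remains a constant independent of $m$.

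The main obstacle is the upper bound on $\alpha(G)$, since as the shift-graph example shows, the naive versions of this construction have $\alpha \sim |V(G)|$, far from the target $|V(G)| \cdot (\log \log m)/\log m$. To prove the needed bound I would argue that any large independent set in $G$ must project, via the shift or coupling structure, onto a large ``chain-free'' subset of $[m]$ (or $[m]^d$); a Dilworth- or Erd\H{o}s--Szekeres-type chain-antichain argument then forces this projection to be small, contradicting the size assumption. Once the bound $\alpha(G) \le C |V(G)| (\log \log m)/\log m$ is in place, setting $m$ so that the right-hand side is at most $n-1$ gives $N = |V(G)| \ge c n (\log n)/(\log \log n)$, completing the proof.
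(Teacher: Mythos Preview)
Your proposal has a genuine gap at exactly the point you identify as ``the main obstacle'': bounding the independence number. None of the candidate constructions you list---the ordinary shift graph, the $d$-shift graph, or vague ``blow-ups'' and ``couplings''---comes with a workable argument that $\alpha(G)$ is small enough. Indeed, for the $d$-shift graph on $\binom{m}{d}$ vertices the chromatic number is only an iterated logarithm, so the largest independent set has size at least $\binom{m}{d}/\log^{(d-1)} m$, which is \emph{worse} (not better) than the $d=2$ case relative to $|V|$. Your final paragraph sketches a Dilworth/Erd\H{o}s--Szekeres projection argument, but you have already shown on the previous page why that cannot succeed: in the shift graph an independent set of size $\Theta(m^2)$ exists, and the projection onto $[m]$ is all of $[m]$, not a small ``chain-free'' set. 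Nothing in the proposed modifications breaks this obstruction.

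The paper's proof takes a completely different route. It starts not from a shift graph but from a $K_{2,2}$-free bipartite incidence graph $G$ between $n_0$ points and $n_0$ open rectangles in the plane having $N \ge c_0 n_0 \log n_0/\log\log n_0$ edges (a construction of Basit--Chernikov--Starchenko--Tao--Tran). It then applies the Alon--Pudl\'ak \emph{super-line graph} construction: the vertices of $H$ are the edges of $G$, and two edges are adjacent in $H$ according to an order-and-incidence rule. The point of this transformation is precisely that it controls the independence number automatically: any independent set in $H$ has size at most $2n_0$, while $H$ is triangle-free whenever $G$ is $K_{2,2}$-free. Finally one checks that $H$, whose vertices are encoded in $\mathbb{R}^8$ (point, rectangle corners, and two rank coordinates), is semilinear of complexity $10$. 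The missing idea in your proposal is this edge-to-vertex passage; without it there is no mechanism forcing $\alpha(G)$ down to $O(|V(G)|\log\log n/\log n)$.
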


The \emph{girth} of a graph is the length of its shortest cycle. What happens if instead of forbidding a triangle or a clique, we forbid small girth? A celebrated probabilistic argument of Erd\H{o}s \cite{Er59} shows that for every $g$, there exist graphs with girth $g$ and independence number $n$ having superlinear number of vertices with respect to $n$. We prove the following extension of Theorem \ref{thm:mainlower}, which shows that semilinear graphs of constant complexity, girth $g$ and independence number $n$ can also have superlinear number of vertices with respect to $n$.

\begin{theorem}\label{thm:lower2}
There exists a positive integer $t$ such that for every positive integer $g$ the following holds. For every $n$, there exists a semilinear graph $G$ of complexity $t$ such that the girth of $G$ is at least $g$, $G$ contains no independent set of size $n$, and $G$ has at least $cn\log\log n$ vertices, where $c>0$ depends only on $g$ and $t$.
\end{theorem}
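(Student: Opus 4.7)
The plan is to construct, for each $g$, a semilinear graph of constant complexity $t$ (not depending on $g$) with girth at least $g$, independence number at most $n$, and at least $c_g n \log\log n$ vertices. The starting point is the $3$-shift graph $S_3(N)$, whose vertex set is $\{(a,b,c) \in [N]^3 : a < b < c\}$ and whose edges are the shifts $(a,b,c) \sim (b,c,d)$. This graph is semilinear of constant complexity $t=4$: the edge relation is $(y_1 = x_2 \wedge y_2 = x_3) \vee (x_1 = y_2 \wedge x_2 = y_3)$, which fits the framework of (\ref{equ:def1}) using the four linear functions $y_1 - x_2$, $y_2 - x_3$, $x_1 - y_2$, $x_2 - y_3$. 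The classical Erd\H{o}s--Hajnal--Rado analysis of shift graphs gives $\chi(S_3(N)) = \Theta(\log \log N)$, and a direct combinatorial check (short cycles would force an impossible monotone chain of integers to close) shows that $S_3(N)$ has girth at least some absolute constant $g_0 \geq 5$.

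Choosing $N$ so that $\binom{N}{3} \asymp n \log\log n$ yields $|V(S_3(N))| \geq c n \log\log n$ and $\alpha(S_3(N)) \leq |V|/\chi \leq n$, which handles the case $g \leq g_0$ directly. For arbitrary $g$, I would pass to an induced subgraph $G \subseteq S_3(N)$ of girth at least $g$. Since induced subgraphs of semilinear graphs are semilinear of the same complexity, $G$ retains $t=4$. To construct such a $G$, I would bound the number of cycles of length $< g$ in $S_3(N)$ by $N^{h(g)}$ for some explicit $h$ (each short cycle corresponds to a bounded-length closed walk in the underlying shift structure), and then apply a vertex-deletion argument: remove a small fraction of vertices so as to destroy all short cycles.

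The main obstacle is ensuring that this deletion retains a lower bound of $\Omega_g(\log\log N)$ on the chromatic number, since random deletion does not in general preserve chromatic number. I would address this by exploiting the recursive hierarchical structure of the canonical coloring of $S_3(N)$ that witnesses $\chi = \Theta(\log\log N)$: by choosing the deletion to be compatible with this hierarchy (or applying a local-lemma-style alteration argument), the chromatic lower bound should survive up to constants depending on $g$. Verifying this robustness of $\chi(S_3(N))$ under the short-cycle-killing deletion is the key technical step; once it is in place, rescaling $N$ to compensate for the constant loss of vertices gives the desired $c_g n \log\log n$ bound.
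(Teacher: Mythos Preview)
Your proposal has a fatal error at the very first step: the inequality $\alpha(S_3(N)) \leq |V|/\chi$ is backwards. For any graph one has $\alpha(G)\geq |V(G)|/\chi(G)$, not the reverse, so knowing $\chi(S_3(N))=\Theta(\log\log N)$ gives you no upper bound on the independence number. In fact $S_3(N)$ has independence number a constant fraction of its vertex set: the triples $(a,b,c)$ with $a\le N/3<b\le 2N/3<c$ form an independent set of size roughly $(N/3)^3\approx \tfrac{2}{9}\binom{N}{3}$, since a shift of such a triple would have first coordinate $b>N/3$. Hence no choice of $N$ can make $|V(S_3(N))|$ superlinear in $\alpha(S_3(N))$, and the shift graph is simply not a candidate construction for this theorem. (Your girth claim is also off: $S_3(N)$ contains $4$-cycles such as $(1,3,4)\!-\!(3,4,5)\!-\!(2,3,4)\!-\!(3,4,6)\!-\!(1,3,4)$, so its girth is $4$, not $\ge 5$.) Since the base object already fails, the subsequent deletion step---which you correctly flag as the hard part and leave unresolved---never gets off the ground.

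The paper avoids chromatic-number reasoning altogether. It first builds, for each $g$, a bipartite incidence graph $G$ of $n$ points and $n$ open rectangles with girth at least $2g$ and $\Omega_g(n\log\log n)$ edges; this is done by an iterated product construction $G\mapsto G\otimes_H k$ where the auxiliary bipartite graph $H$ is chosen probabilistically so as to kill short ``bad'' cycles while retaining enough edges. It then passes to the \emph{super-line graph} of $G$ (an ordered-bipartite analogue of the line graph), which by a lemma of Alon--Pudl\'ak has independence number at most $2n$ and girth at least $g+1$, and has one vertex per edge of $G$, hence $\Omega_g(n\log\log n)$ vertices. Finally one checks directly that this super-line graph is semilinear of complexity $10$. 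The key point is that the independence bound comes from the structural Lemma~\ref{lemma:superline}, not from any chromatic-number estimate.
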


In order to prove Theorem \ref{thm:lower2}, we show that there exist $n$ points and $n$ open rectangles in the plane such that their incidence graph $G$ has girth at least $g$, and $G$ has $\Omega_g(n\log\log n)$ edges. This extends (with slightly weaker bounds) a construction of \cite{BCSTT20} which dealt with the case $g=6$, and improves a result of \cite{D20}. See Section \ref{sect:construction} for more details.

\medskip

 Theorem \ref{thm:mainupper0}  is an immediate consequence of the following theorem about the coloring properties of semilinear graphs. If $G$ is a graph, $\chi(G)$ denotes the chromatic number of $G$.

\begin{theorem}\label{thm:mainthm}
Let $t$ be a positive integer, then there exist $\alpha,\beta,\gamma>0$ such that the following holds. Let $s,n$ be positive integers, and let $G$ be a semilinear graph  of complexity $t$ on $n$ vertices, which contains no clique of size $s$. Then $$\chi(G)\leq \alpha s^{\beta}\cdot(\log n)^{\gamma}.$$
\end{theorem}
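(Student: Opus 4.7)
The plan is to first reduce to a conjunctive (semilinear*) form using the equivalence between the two definitions announced earlier in the paper. Write $E(G)=E(G_1)\cup\cdots\cup E(G_u)$, where each $G_i$ is defined by a single conjunction $\bigwedge_{j=1}^{t'}f_{i,j}(\m{x},\m{y})<0$ and $t',u$ are bounded by constants depending only on $t$. The product coloring $\m{v}\mapsto(c_1(\m{v}),\ldots,c_u(\m{v}))$ is proper on $G$ whenever each $c_i$ is proper on $G_i$, so $\chi(G)\leq\prod_{i=1}^u\chi(G_i)$. Each $G_i$ is a $K_s$-free subgraph of $G$, and $u=O_t(1)$, so it suffices to bound $\chi(G_i)$ for a single conjunctive $G_i$ by $\alpha s^{\beta}(\log n)^{\gamma}$ with constants depending on $t$.

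I would then prove by induction on the number of conjuncts $t'$ that every $K_s$-free graph $H$ on $n$ vertices whose edges are defined by $\bigwedge_{j=1}^{t'}f_j(\m{x},\m{y})<0$ satisfies $\chi(H)\leq\alpha(t')s^{\beta(t')}(\log n)^{\gamma(t')}$. In the base case $t'=1$, the symmetry of the edge relation forces $f_1$ to be a symmetric affine function of $(\m{x},\m{y})$, hence of the form $\phi(\m{x})+\phi(\m{y})-c$ for some linear $\phi$ and constant $c$. The resulting graph then embeds into an interval overlap graph (each vertex $\m{v}$ corresponding to the half-line $(-\infty,c-\phi(\m{v}))$) and is therefore perfect, so $\chi(H)=\omega(H)\leq s-1$.

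For the inductive step I propose a balanced hierarchical partition of $V(H)$ of depth $O(\log n)$: at each level, split the current vertex set into two roughly equal halves along a carefully chosen linear function (for instance, the median of the projection onto one coordinate of $\mathbb{R}^d$, or more generally a hyperplane that balances the arrangement of the $f_j$'s). Recursively color the two halves using the inductive hypothesis, and color the cross-edges, which form a bipartite semilinear subgraph, with a disjoint fresh palette. The essential observation is that in the bipartite cross-graph, the one-sided nature of the partition effectively removes one of the $t'$ inequalities (one of the $f_j$'s becomes redundant on one side of the cut), so the cross-graph reduces to complexity at most $t'-1$ and can be colored inductively. Combining the colorings via disjoint palettes through $O(\log n)$ recursion levels yields the recursion $\gamma(t')\leq\gamma(t'-1)+O(1)$, so $\gamma(t')=O(t')$, and the polynomial dependence on $s$ is tracked separately.

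The main obstacle is in the inductive step: choosing the partitioning function so that the bipartite cross-subgraph really does have lower effective complexity. Unlike the base case, where the symmetry of the single inequality can be directly exploited, for $t'\geq 2$ the individual $f_j$'s need not be symmetric in $(\m{x},\m{y})$, only the whole conjunction is; this means that a naive projection onto one axis need not reduce the number of effective conjuncts, and a more delicate sweep argument is required. The shift graph (Theorem \ref{thm:mainlower}) shows that the polylogarithmic factor is unavoidable, so the recursion must be tight enough to produce exactly one power of $\log n$ per inductive step. A further subtle point is the accounting in the reduction of the first paragraph, where the product coloring requires $u$ and the dependence of $\alpha,\beta,\gamma$ on $t'$ to combine without blowing up the final bound; this in turn depends on the precise form of the equivalence between the semilinear and semilinear* definitions.
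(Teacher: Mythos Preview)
Your first paragraph---reduce to semilinear* via the equivalence lemma, write $G=G_1\cup\cdots\cup G_u$, and use the product coloring---is exactly what the paper does. One correction: the individual conjunctions $\bigwedge_j\{f_{i,j}(\m{x},\m{y})<0\}$ need not define symmetric relations (only the full disjunction does), so each $G_i$ must be taken as the \emph{symmetrized} graph with edge set $\{\m{x}\sim\m{y}:\bigwedge_j f_{i,j}(\m{x},\m{y})<0\text{ or }\bigwedge_j f_{i,j}(\m{y},\m{x})<0\}$. Writing $f_{i,j}(\m{x},\m{y})=g_{i,j}(\m{x})+h_{i,j}(\m{y})$, this makes each $G_i$ isomorphic to what the paper calls a \emph{quasi-comparability graph} of complexity $t'$: vertices are pairs $(\m{a},\m{b})\in\mathbb{R}^{t'}\times\mathbb{R}^{t'}$, with an edge iff $\m{a}\prec\m{b}'$ or $\m{a}'\prec\m{b}$. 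This also means your base-case claim that ``symmetry forces $f_1$ to be symmetric affine'' is incorrect; for $t'=1$ the right statement is that a quasi-comparability graph of complexity $1$ splits into two type classes, one inducing a clique and the other an interval intersection graph, both perfect.

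The real gap is the inductive step, and you have correctly located it. The hope that a single balanced hyperplane cut makes one conjunct redundant on the bipartite cross-graph does not pan out: splitting by the median of (say) the first coordinate of $\m{a}$ gives you no control over the first coordinate of $\m{b}'$, so neither disjunct $\m{a}\prec\m{b}'$ nor $\m{a}'\prec\m{b}$ loses an inequality. The paper circumvents this by \emph{not} inducting directly on $t'$. Instead it first partitions $V(G_i)$ by the sign pattern $\epsilon\in\{-,+\}^{t'}$ of $\m{b}-\m{a}$; within a fixed type with positive-coordinate set $P\subset[t']$, it shows that $G_i[V_\epsilon]=\bigcup_{Q\subset P}(G_Q\cap H_Q)$, where $H_Q$ is a genuine comparability graph (built from the coordinates in $Q\cup([t']\setminus P)$) and $G_Q$ is the intersection graph of $(|P|-|Q|)$-dimensional axis-parallel boxes (built from the coordinates in $P\setminus Q$). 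The divide-and-conquer you envision is then applied, but only to the \emph{box} part: a median hyperplane in one box-coordinate really does reduce the box dimension by one on the slab of boxes it pierces, and on that slab the remaining graph is still of the form (lower-dimensional box graph) $\cap$ (comparability graph). Combined with Mirsky's theorem for the comparability factor, this yields $\chi(G_Q\cap H_Q)\le c\,s(\log n)^{|P|-|Q|}$, and the product bound over all $Q$ and all $\epsilon$ finishes. The missing idea in your sketch is precisely this separation of coordinates into a ``comparability'' part (handled by perfection) and a ``box-intersection'' part (where the hyperplane sweep genuinely drops the dimension).
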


Indeed, in a proper coloring, every colorclass is an independent set, so Theorem \ref{thm:mainthm} implies Theorem~\ref{thm:mainupper0}. In the past 60 years, many different instances of Theorem \ref{thm:mainthm} have been considered.  Given a family $\mathcal{F}$ of geometric objects, the \emph{intersection graph of $\mathcal{F}$} is the graph, whose vertices are the elements of $\mathcal{F}$, and two vertices are joined by an edge if the corresponding sets have a nonempty intersection. Also, the \emph{disjointness graph of $\mathcal{F}$} is the complement of the intersection graph. One of the first results in this long line of research is the celebrated result of Asplund and Gr\"unbaum \cite{AG60}, which states that if $G$ is the intersection graph of rectangles in the plane, and $G$ contains no clique of size $s$, then $\chi(G)=O(s^{2})$.  This was only recently improved to $O(s\log s)$ by Chalermsook and Walczak \cite{CW20}. Note that the intersection graph of boxes in $\mathbb{R}^{d}$ is semilinear of complexity $2d$: an open box $R$ is defined by a $2d$-tuple $(a_1,\dots,a_d,b_1,\dots,b_d)$ such that $R=\{(x_1,\dots,x_d)\in \mathbb{R}^{d}:\forall i\in [d], a_i< x_i<b_i\}$, and the boxes defined by $(a_1,\dots,a_d,b_1,\dots,b_d)$ and $(a_1',\dots,a_d',b_1',\dots,b_d')$ intersect if and only if $a_i'<b_i$ and $a_i<b_i'$ hold for all $i\in [d]$. In general, if $G$ is the intersection graph of $n$ boxes in $\mathbb{R}^d$, and $G$ contains no clique of size $s$, then $\chi(G)=O(s(\log n)^{d-1})$, and a construction of Burling \cite{B65} shows that in case $d,s\geq 3$, there exists such $G$ with $\chi(G)=\Omega(\log\log n)$. On the other hand, K\'arolyi \cite{K91} proved that if $G$ is the disjointness graph of boxes in $\mathbb{R}^d$, and $G$ contains no clique of size $s$, then $\chi(G)=O(s(\log s)^{d-1})$. Recently, Davies \cite{D20} proved that there exist intersection graphs of boxes in $\mathbb{R}^{3}$ with $n$ vertices, girth $g$, and chromatic number $\alpha_g(n)$, where $\alpha_g(n)$ is a function tending to infinity extremely slowly (even compared to $\log\log n$).
Another interesting family of semilinear graphs is the intersection graph of L-shapes. An \emph{L-shape} is a vertical and a horizontal segment joined at the lower endpoint of the formal, and left endpoint of the latter. Walczak \cite{W19} showed that a triangle-free intersection graph of $n$ $L$-shapes has chromatic number at most $O(\log\log n)$, and this bound is the best possible.

One of the most elegant constructions of triangle-free graphs with arbitrarily large chromatic number is due to Erd\H{o}s and Hajnal \cite{EH64}. For positive integers $m\geq k\geq 2$, the \emph{shift graph $S(m,k)$} is the graph, whose vertices are the $k$-tuples of integers $(x_1,\dots,x_k)$ satisfying $1\leq x_1<\dots<x_k\leq m$, and $(x_1,\dots,x_k)$ and $(y_1,\dots,y_k)$ are joined by an edge if $x_{i+1}=y_i$ for $i=1,\dots,k-1$. Clearly, the shift graph $S(m,k)$ is semilinear of complexity $k-1$, it has $n=\binom{m}{k}$ vertices, and contains no triangles. Also, its chromatic number is $(1+o(1)) \log_2\dots\log_2 m$, where the $\log_2$ is iterated $(k-1)$-times. In particular, $S(m,2)$ has chromatic number $\lfloor \log_2 m\rfloor$, but it is not a good example for Theorem \ref{thm:mainlower}, as it contains linear sized independent sets. 

 Given a family $\mathcal{F}$ of geometric objects, its \emph{overlap graph} is the graph whose vertices are the elements of $\mathcal{F}$, and two vertices are joined by an edge if the corresponding sets have a nonempty intersection, and none of them contains the other. Clearly, overlap graphs of boxes are semilinear of constant complexity. It was proved by Gy\'arf\'as \cite{Gy85} that if $G$ is  an overlap graph of intervals (which is the same as an intersection graph of chords of a cycle), and $G$ contains no clique of size $s$, then $\chi(G)=2^{O(s)}$. This was only recently improved to $\chi(G)=O(s^2)$ by Davies and McCarty \cite{DM19}. If $G$ is the triangle-free overlap graph of $n$ rectangles, then $\chi(G)=O(\log\log n)$ \cite{KPW15}, and this bound is the best possible.
 
 \subsection{Symmetric Ramsey properties of semilinear graphs}
 
 Semilinear graphs of constant complexity are special instances of semialgebraic graphs of constant complexity. A graph $G$ is \emph{semialgebraic} of complexity $t$, if $V(G)\subset \mathbb{R}^{d}$ for some $d\leq t$, and the edges of $G$ are defined by the sign-patterns of $t$ polynomials $f_1,\dots,f_t:\mathbb{R}^{d}\times\mathbb{R}^d\rightarrow \mathbb{R}$ of degree at most $t$. We remark that the analogue of Theorem \ref{thm:mainupper0} does not hold for semialgebraic graphs. See, for example, the recent construction of Suk and the author of this paper \cite{ST21}, which shows that for every $n$, there exists a semialgebraic graph of constant complexity on $\Omega(n^{4/3})$ vertices which contains no triangle and no independent set of size~$n$. 
 
  The (symmetric) Ramsey properties of semi-algebraic graphs were first studied by
Alon, Pach, Pinchasi, Radoi\v ci\'c, and Sharir \cite{APPRS}. They proved that if $G$ is a semialgebraic graph of complexity $t$ which contains no clique or independent set of size $n$, then $G$ has at most $n^{O_t(1)}$ vertices. It might be tempting to conjecture that semilinear graphs have even tamer symmetric Ramsey properties. Indeed, it follows from the aforementioned theorem of K\'arolyi \cite{K91}, and was also proved in \cite{LMPT94}, that if $G$ is an intersection graph of boxes in $\mathbb{R}^d$ which contains no clique or independent set of size $n$, then $G$ has at most $O_d(n^2(\log n)^{d-1})$ vertices. That is, the exponent of $n$ does not grown with the complexity. However, this is too much to ask for in general. One of the most well know explicit constructions of graphs with good Ramsey properties is due to Frankl and Wilson \cite{FW81}. Let $p$ be a prime, and let $G$ be the graph, whose vertices are the $p^2-1$ element subsets of $[m]$, and two such sets, $A$ and $B$ are joined by an edge if $|A\cap B|\equiv -1 \pmod p$. Note that this graph is semilinear of complexity $(p^2-1)^2$. Indeed, we can represent each set $A$ as a vector $\mathbf{x}_{A}\in \mathbb{R}^{p^2-1}$ by listing its elements, then the edges of $G$ are defined by the zero-patterns of the $(p^2-1)^2$  polynomials $f_{i,j}(\mathbf{x},\mathbf{y})=\mathbf{x}(i)-\mathbf{y}(j)$ for $(i,j)\in [p^2-1]^2$. The graph $G$ has $N=\binom{m}{p^2-1}$ vertices, and by the celebrated Frankl-Wilson theorem on restricted intersections, $G$ contains no clique or independent set of size larger than $n=\binom{m}{p-1}$. As $N=\Omega_p(n^{p+1})$, we have $R_t(n,n)=\Omega_p(n^{p+1})$ for $t=(p^2-1)^2$.

However, we can still give an explanation why intersection graphs of boxes have very tame symmetric Ramsey properties. Let $R_{t,u}(n)$ denote the smallest $N$ such that every semilinear* graph of complexity $(t,u)$ on $N$ vertices contains either a clique or an independent set of size $n$. Note that the intersection graph of boxes in dimension $d$ is semilinear* of complexity $(2d,1)$. In general, we prove that the order of $R_{t,u}(n)$ depends only on $u$.

\begin{theorem}\label{thm:sym}
Let $t,u$ be positive integers. Then there exist $\alpha=\alpha(t,u)$ and $\beta=\beta(u)$ such that  $$R_{t,u}(n)\leq \alpha n^{\beta}.$$
\end{theorem}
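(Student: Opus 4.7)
The plan is an induction on $u$: the base case $u=1$ will be reduced to the classical Ramsey theory of intersection graphs of axis-aligned boxes in $\mathbb{R}^{t}$, and the inductive step will proceed by peeling off a single clause of the defining disjunction via an asymmetric Ramsey argument.

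For the base case, I write the defining functions as $f_{j}(\m{x},\m{y})=a_{j}^{\top}\m{x}+b_{j}^{\top}\m{y}+c_{j}$ and introduce the linear forms $s_{j}(\m{x})=(a_{j}+b_{j})^{\top}\m{x}+c_{j}$ and $d_{j}(\m{x})=(a_{j}-b_{j})^{\top}\m{x}$, which satisfy $f_{j}(\m{x},\m{y})+f_{j}(\m{y},\m{x})=s_{j}(\m{x})+s_{j}(\m{y})$ and $f_{j}(\m{x},\m{y})-f_{j}(\m{y},\m{x})=d_{j}(\m{x})-d_{j}(\m{y})$. The symmetry of $\bigwedge_{j}\{f_{j}(\m{x},\m{y})<0\}$ makes the edge condition equivalent to $\bigwedge_{j}\{f_{j}(\m{x},\m{y})<0\wedge f_{j}(\m{y},\m{x})<0\}$, which via the identities above becomes $\bigwedge_{j\in[t]}\{s_{j}(\m{x})+s_{j}(\m{y})+|d_{j}(\m{x})-d_{j}(\m{y})|<0\}$. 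I then split $V(G)=V_{-}\sqcup V_{+}$ with $V_{-}=\{\m{x}:s_{j}(\m{x})<0\text{ for every }j\in[t]\}$. On $V_{-}$ the open boxes $B(\m{x})=\prod_{j=1}^{t}\bigl(d_{j}(\m{x})+s_{j}(\m{x}),\, d_{j}(\m{x})-s_{j}(\m{x})\bigr)\subset\mathbb{R}^{t}$ are nonempty and $B(\m{x})\cap B(\m{y})\neq\emptyset$ if and only if $\{\m{x},\m{y}\}\in E(G)$, so $G[V_{-}]$ is literally an intersection graph of axis-aligned boxes in $\mathbb{R}^{t}$; on $V_{+}$, grouping vertices by the smallest index $j$ with $s_{j}(\m{x})\geq 0$ produces $t$ sets on each of which the $j$-th clause fails for every pair, so each part is an independent set of $G$ and $|V_{+}|<tn$. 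Combining this decomposition with the chromatic bound $\chi=O_{t}(s(\log N)^{t-1})$ for intersection graphs of $N$ boxes in $\mathbb{R}^{t}$ with no $K_{s}$ (recalled in Section~\ref{sect:asym}) and bootstrapping the logarithmic factor into an $(sn)^{\varepsilon}$ loss yields the \emph{asymmetric} polynomial Ramsey estimate $R_{t,1}(s,n)\leq \alpha_{t}(sn)^{1+\varepsilon}$ for any absolute $\varepsilon>0$; in particular $\beta(1)=2(1+\varepsilon)$ is an absolute exponent.

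For the inductive step, assume $R_{t,u-1}(n)\leq\alpha_{t,u-1}n^{\beta(u-1)}$ and let $G$ be a semilinear* graph of complexity $(t,u)$ on $N$ vertices with $\omega(G),\alpha(G)<n$. The defining disjunction decomposes $E(G)=E_{1}\cup\dots\cup E_{u}$, and each $H_{i}=(V(G),E_{i})$ is a subgraph of $G$ that is itself semilinear* of complexity $(t,1)$; in particular $\omega(H_{1})\leq\omega(G)<n$. Applying the asymmetric base-case bound to $H_{1}$ (which has no $K_{n}$) produces an independent set $I\subseteq V(H_{1})$ satisfying $N\leq\alpha_{t}(n|I|)^{1+\varepsilon}$. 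Since $I$ contains no edges of $E_{1}$, the induced subgraph $G[I]$ uses only edges from $E_{2}\cup\dots\cup E_{u}$ and is therefore semilinear* of complexity $(t,u-1)$, while still satisfying $\omega,\alpha<n$; the inductive hypothesis forces $|I|\leq\alpha_{t,u-1}n^{\beta(u-1)}$. Chaining the two inequalities gives $N\leq\alpha_{t}\alpha_{t,u-1}^{1+\varepsilon}n^{(1+\varepsilon)(\beta(u-1)+1)}$, so the recursion $\beta(u)=(1+\varepsilon)(\beta(u-1)+1)$ propagates the property that $\beta$ depends only on $u$ (and the fixed absolute constant $\varepsilon$), closing the induction.

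The technical heart of the argument is the base case: the linear-algebraic rewriting that turns a symmetric conjunction of $t$ strict linear inequalities into a box-intersection condition on $V_{-}$, together with the observation that the ``bad'' set $V_{+}$ is covered by only $t$ independent sets of $G$. The dependence on $t$ enters only through $\alpha_{t}$ and the logarithmic factor in the box chromatic bound (which is absorbed into $\varepsilon$); this is precisely why the recursion in the inductive step never inflates the exponent of $n$ with $t$, so that the final exponent $\beta(u)$ depends only on $u$.
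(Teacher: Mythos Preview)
Your base case for $u=1$ is correct and rather nice: the rewriting via $s_j$ and $d_j$ does turn a symmetric conjunction of $t$ strict linear inequalities into a box-intersection graph on $V_-$, with the leftover $V_+$ covered by $t$ independent sets, and the box chromatic bound then gives $R_{t,1}(s,n)\leq\alpha_t(sn)^{1+\varepsilon}$ as claimed.

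The gap is in the inductive step. The paper's symmetry hypothesis only requires the \emph{full} disjunction to be invariant under swapping $\m{x}$ and $\m{y}$ on $V(G)$; it does not force any individual clause $\bigwedge_{j}\{f_{1,j}(\m{x},\m{y})<0\}$ to be symmetric (for instance, with $u=2$, $t=1$, take $f_{1,1}(\m{x},\m{y})=\m{x}(1)-\m{y}(1)$ and $f_{2,1}=-f_{1,1}$). So your $H_1$ is not in general a semilinear* graph of complexity $(t,1)$, and the base case does not apply to it. If you symmetrise by OR (edge iff the first clause holds in at least one direction), then $H_1$ becomes a quasi-comparability graph of complexity $t$, to which your box reduction no longer applies; the only chromatic estimate available there is Theorem~\ref{thm:quasi}, whose exponent on $s$ is $2^t$, and that would inject a $t$-dependence into $\beta$. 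If you symmetrise by AND, your box argument does apply, but an independent set of this smaller graph can still contain pairs for which clause~$1$ holds in one direction, so $G[I]$ is not semilinear* of complexity $(t,u-1)$ and the recursion breaks. This is exactly the obstacle the paper overcomes with Theorem~\ref{thm:quasisym}, which establishes an \emph{absolute} Erd\H{o}s--Hajnal exponent for quasi-comparability graphs (via a cograph argument and the Boolean-lattice Lemma~\ref{lemma:EH}); that absolute exponent is what keeps $\beta(u)$ free of $t$ through the induction.
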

 
 \medskip
 
 Our paper is organized as follows. In the next section, we present our notation and prepare the proofs of our main results. Then, in Section \ref{sect:color}, we prove Theorem \ref{thm:mainthm}, and in Section \ref{sect:sym}, we prove Theorem \ref{thm:sym}. In Section \ref{sect:construction}, we present our constructions, that is, we prove Theorems \ref{thm:mainlower} and \ref{thm:lower2}. Then, we conclude our paper with some remarks and open problems. We omit the use of floors and ceilings whenever they are not crucial.

\section{Preliminaries}\label{sect:prelim}

Let us first show that the two notions of semilinearity can be exchanged up to the value of $t$ and $u$. Clearly, if $G$ is semilinear* of complexity $(t,u)$, then it is semilinear of complexity $t\cdot u$. Let us show the other direction as well.

\begin{lemma}\label{lemma:def}
 For every $t$, there exist $t'$ and $u$ such that if $G$ is semilinear of complexity $t$, then $G$ is semilinear* of complexity $(t',u)$.
\end{lemma}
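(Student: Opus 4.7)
The plan is to reduce the problem to a purely syntactic rewriting: on the finite set $V(G)\times V(G)$, every Boolean combination of the atomic predicates $f_i<0$, $f_i\leq 0$, $f_i=0$ coincides with a DNF of strict linear inequalities in appropriately shifted versions of the $f_i$. The key ingredient is an $\eps$-shift that replaces closed predicates by open ones, exploiting finiteness of $V(G)$.

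First I would observe that, since $V(G)$ is finite, the set $\{f_i(\m{x},\m{y}):i\in[t],\ \m{x},\m{y}\in V(G)\}$ is a finite subset of $\mathbb{R}$, so one may pick an $\eps>0$ strictly smaller than every nonzero absolute value appearing in it. On $V(G)\times V(G)$ the following equivalences then hold for each $i\in[t]$:
\begin{align*}
f_i\leq 0 &\iff f_i-\eps<0,\\
f_i\geq 0 &\iff -f_i-\eps<0,\\
f_i=0 &\iff (f_i-\eps<0)\wedge(-f_i-\eps<0),\\
f_i\neq 0 &\iff (f_i<0)\vee(-f_i<0),\\
f_i>0 &\iff -f_i<0,
\end{align*}
while $f_i<0$ is already a strict inequality. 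Crucially, $\pm f_i-\eps$ is still a linear function of $(\m{x},\m{y})$: shifting by a constant only alters the constant term.

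Next I would write the Boolean function $\phi$ from~(\ref{equ:def1}) in disjunctive normal form, obtaining at most $2^{3t}$ clauses of length at most $3t$, each literal being an atom or a negated atom, and replace every literal by its right-hand side above. A literal arising from the $f_i=0$ case contributes two conjuncts; a literal arising from the $f_i\neq 0$ case contributes a disjunction, which I distribute over the surrounding conjunction, splitting that clause into two. After all such distributions the formula is a pure DNF $\bigvee_{i=1}^{u}\bigwedge_{j=1}^{t'}(g_{i,j}(\m{x},\m{y})<0)$ in strict linear inequalities, with $t'\leq 6t$ and $u\leq 2^{3t}\cdot 2^{3t}=2^{6t}$; both bounds depend only on $t$. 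By construction this DNF has the same truth value as $\phi$ at every $(\m{x},\m{y})\in V(G)\times V(G)$, so it realises $G$ as a semilinear* graph of complexity $(t',u)$.

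I do not expect any step to be a serious obstacle; the only subtle point is the choice of $\eps$, which must be smaller than every nonzero value of $|f_i|$ on $V(G)\times V(G)$ so that the open predicate $f_i-\eps<0$ truly agrees with the closed predicate $f_i\leq 0$ on the finite vertex set. This is precisely where finiteness of $V(G)$ is used. Note also that no perturbation of $V(G)$ itself is required, only of the constant terms of the defining linear functions, so the vertex set of $G$ is preserved throughout.
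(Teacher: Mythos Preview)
Your proposal is correct and follows essentially the same route as the paper: write $\phi$ in DNF, use finiteness of $V(G)$ to choose an $\eps>0$ so that each closed predicate $f_i\le 0$ agrees with the open predicate $f_i-\eps<0$ on $V(G)\times V(G)$, and then rewrite every literal as a conjunction or disjunction of strict inequalities in functions drawn from $\{f_\ell,-f_\ell,f_\ell-\eps,-f_\ell-\eps\}$. Your explicit bookkeeping of the bounds $t'\le 6t$ and $u\le 2^{6t}$ is a little loose but perfectly valid; the paper simply asserts that $t'$ and $u$ depend only on $t$ without tracking constants.
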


\begin{proof}
Let $f_1,\dots,f_t:\mathbb{R}^{d}\times\mathbb{R}^{d}\rightarrow \mathbb{R}$ be linear functions and $\phi:\{\mbox{F},\mbox{T}\}^{3t}\rightarrow \{\mbox{F},\mbox{T}\}$ be a Boolean formula defining $G$ with respect to (\ref{equ:def1}). Then there exist a positive integer $u$ and $\epsilon_{i,j}\in \{-1,0,1\}$ for $(i,j)\in [u]\times [3t]$ such that
$$\phi(b_1,\dots,b_{3t})=\bigvee_{i\in [u]}\left(\bigwedge_{j\in [3t]}b_{j}^{\epsilon_{i,j}}\right),$$
where $b^{-1}=\neg b$, $b^{0}=\mbox{T}$ and $b^{1}=b$ for $b\in\{\mbox{F},\mbox{T}\}$.
Note that if $b=\{f_i(\m{x},\m{y})<0\}$, then $b^{-1}=\{-f_i(\m{x},\m{y})\leq 0\}$. Also, if $b=\{f_i(\m{x},\m{y})=0\}$, then $b=\{f_i(\m{x},\m{y})\leq 0\}\wedge \{-f_i(\m{x},\m{y})\leq 0\}$, and  $b^{-1}=\{f_i(\m{x},\m{y})< 0\}\vee \{-f_i(\m{x},\m{y})< 0\}$. Finally, using that $V(G)$ is finite, there exists a sufficiently small $\epsilon>0$ such that for every $\m{x},\m{y}\in V(G)$, we have  $\{f_i(\m{x},\m{y})\leq 0\}=\{f_i(\m{x},\m{y})-\epsilon< 0\}.$
Therefore, we can find $t'$ depending only $t$, and $t'u$ functions $f_{i,j}\in \{f_\ell,-f_\ell,f_\ell-\epsilon,-f_\ell-\epsilon:\ell\in [t]\}$ for $(i,j)\in [u]\times [t']$ which define $G$ with respect to (\ref{equ:def2}).
\end{proof}

We say that a graph $G$ is a \emph{comparability graph} if there exists a partial ordering $\prec$ on $V(G)$ such that $\{x,y\}\in E(G)$ if and only if $x\prec y$ or $y\prec x$. Also, let $\omega(G)$ denote the clique number of $G$. We will use the following property of comparability graphs, which is the consequence of the dual of Dilworth's theorem (also known as Mirsky's theorem).

\begin{lemma}\label{lemma:dilworth}
Let $G$ be a comparability graph. Then $\chi(G)=\omega(G)$.
\end{lemma}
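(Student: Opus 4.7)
The plan is to prove the nontrivial direction $\chi(G) \leq \omega(G)$ by exhibiting an explicit proper coloring with $\omega(G)$ colors, using the partial order $\prec$ on $V(G)$ underlying the comparability graph. The reverse inequality $\chi(G) \geq \omega(G)$ holds for any graph, so no work is needed there.

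First I would introduce, for each vertex $v \in V(G)$, the quantity $c(v)$ defined as the maximum size of a $\prec$-chain in $V(G)$ whose top element is $v$ (so $c(v) \geq 1$ always, counting $v$ itself). I claim that $c$ is a proper coloring of $G$. Indeed, if $\{u,v\} \in E(G)$, then by definition of a comparability graph we may assume $u \prec v$. Take any chain of length $c(u)$ ending at $u$; appending $v$ produces a chain of length $c(u)+1$ ending at $v$, so $c(v) \geq c(u)+1 > c(u)$. In particular $c(u) \neq c(v)$, establishing that $c$ is a proper coloring.

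Next I would bound the number of colors used. The image of $c$ is contained in $\{1,2,\dots,L\}$, where $L$ is the length of the longest $\prec$-chain in $V(G)$. But any $\prec$-chain $v_1 \prec v_2 \prec \cdots \prec v_L$ is, by the definition of the comparability graph, a clique of size $L$ in $G$. Hence $L \leq \omega(G)$, and $c$ uses at most $\omega(G)$ colors. Combining, $\chi(G) \leq \omega(G) \leq \chi(G)$, giving equality.

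There is no real obstacle here; the only subtlety is checking that the coloring by longest-chain length is genuinely proper, which is immediate from the fact that comparable elements cannot be at the same chain-top level. The argument is essentially the standard proof of Mirsky's theorem, specialized to reading off the chromatic number of the comparability graph.
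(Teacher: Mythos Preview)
Your proof is correct and is precisely the standard proof of Mirsky's theorem. The paper does not actually prove this lemma; it merely states it as a consequence of the dual of Dilworth's theorem (Mirsky's theorem), so your argument supplies exactly the omitted details.
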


If $G_1,\dots,G_t$ are graphs on the same vertex set, then $G_1\cap\dots\cap G_t$ and $G_1\cup\dots\cup G_t$ denote the graphs on the vertex set $V(G_1)$ whose edge sets are $E(G_1)\cap\dots\cap E(G_t)$ and $E(G_1)\cup\dots\cup E(G_t)$, respectively. 

\begin{lemma}\label{lemma:union}
Let $G_1,\dots,G_t$ be graphs on the same vertex set. Then
$$\chi(G_1\cup\dots\cup G_t)\leq \chi(G_1)\dots\chi(G_t).$$
\end{lemma}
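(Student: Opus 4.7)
The plan is to use the standard product-coloring trick. For each $i \in [t]$, fix a proper coloring $c_i: V(G_i) \to [\chi(G_i)]$ of $G_i$, and define a combined coloring $c: V \to \prod_{i=1}^t [\chi(G_i)]$ by $c(v) = (c_1(v), \dots, c_t(v))$.

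To verify this is a proper coloring of $G_1 \cup \dots \cup G_t$, take any edge $\{u,v\}$ in the union. By definition, there is some $i \in [t]$ with $\{u,v\} \in E(G_i)$, and then $c_i(u) \neq c_i(v)$ since $c_i$ is a proper coloring of $G_i$. Hence the $i$-th coordinate of $c(u)$ differs from that of $c(v)$, so $c(u) \neq c(v)$. The total number of color tuples is $\prod_{i=1}^t \chi(G_i)$, giving the claimed bound.

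There is no real obstacle here; the only thing to note is that we do not need to use every tuple as an actual color, we simply bound $\chi(G_1 \cup \dots \cup G_t)$ by the size of the codomain of $c$. The argument is entirely elementary and takes only a few lines.
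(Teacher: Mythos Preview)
Your proof is correct and follows exactly the same product-coloring argument as the paper: fix optimal proper colorings $c_i$ of each $G_i$ and take the tuple $c(v)=(c_1(v),\dots,c_t(v))$ as the coloring of the union. The paper's proof is essentially the two-line version of what you wrote.
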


\begin{proof}
Let $c_{i}:V(G_i)\rightarrow\mathbb{N}$ be a proper coloring of $G_i$ with $\chi(G_i)$ colors. Then the coloring $c:V(G_1)\rightarrow \mathbb{N}^{t}$ defined as $c(v)=(c_1(v),\dots,c_t(v))$ for $v\in V(G_1)$ is a proper coloring of $G_1\cup\dots\cup G_t$ using at most $\chi(G_1)\dots\chi(G_t)$ colors.
\end{proof}

\section{Coloring semilinear graphs}\label{sect:color}

In this section, we prove Theorem \ref{thm:mainthm}. If $\m{x},\m{y}\in \mathbb{R}^{t}$, then write $\m{x}\prec \m{y}$ if $\m{x}(i)<\m{y}(i)$ for every $i\in [t]$. Then $\prec$ is the coordinate-wise partial ordering on $\mathbb{R}^{t}$.  Say that $G$ is a \emph{quasi-comparability graph of complexity $t$} if $V(G)\subset\mathbb{R}^{t}\times\mathbb{R}^t$, and $(\m{x},\m{y})$ and $(\m{x}',\m{y}')$ are joined by an edge if and only if $\m{x}\prec \m{y}'$ or $\m{x}'\prec \m{y}$. Note that in case $\m{x}=\m{y}$ for every vertex $(\m{x},\m{y})$ of $G$, then $G$ is the comparability graph of a poset of (Duschnik-Miller) dimension $t$. In this section, we prove the following theorem, which then easily implies Theorem \ref{thm:mainthm}.

\begin{theorem}\label{thm:quasi}
For every positive integer $t$, there exist $\alpha,\beta,\gamma>0$ such that the following holds. Let $s,n$ be positive integers, and let $G$ be a quasi-comparability graph of complexity $t$ on $n$ vertices, which contains no clique of size $s$. Then $$\chi(G)\leq \alpha s^\beta (\log n)^{\gamma}.$$
\end{theorem}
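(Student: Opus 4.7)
The plan is to prove Theorem~\ref{thm:quasi} by induction on $t$. For the base case $t=1$, each vertex $v$ is a pair $(\m{x}_v(1),\m{y}_v(1))\in\mathbb{R}^2$, so I would partition $V(G)$ into three sets according to the sign of $\m{y}_v(1)-\m{x}_v(1)$: the set $V^+$ with $\m{x}_v(1)<\m{y}_v(1)$, the set $V^-$ with $\m{x}_v(1)>\m{y}_v(1)$, and $V^0$ with $\m{x}_v(1)=\m{y}_v(1)$. A direct verification shows $V^+$ spans a clique (given $\m{x}_u(1)<\m{y}_u(1)$ and $\m{x}_v(1)<\m{y}_v(1)$ one cannot simultaneously have $\m{x}_u(1)\geq\m{y}_v(1)$ and $\m{x}_v(1)\geq\m{y}_u(1)$), so $|V^+|<s$; the subgraph $G[V^-]$ is the disjointness graph of the intervals $[\m{y}_v(1),\m{x}_v(1)]$, which is the comparability graph of the interval order, so Lemma~\ref{lemma:dilworth} gives $\chi(G[V^-])\leq \omega(G[V^-])<s$; and $V^0$ carries at most $s-1$ distinct values (otherwise an $s$-clique appears), so $\chi(G[V^0])<s$. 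Using three disjoint palettes gives $\chi(G)\leq 3(s-1)$.

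For the inductive step with $t\geq 2$, I would perform a dyadic decomposition of the $t$-th coordinate. Sort the $2n$ values $\{\m{x}_v(t),\m{y}_v(t):v\in V(G)\}$ on the real line and form the standard binary tree of dyadic intervals of depth $O(\log n)$. For each $v$, let $I(v)$ be the minimal dyadic interval containing both $\m{x}_v(t)$ and $\m{y}_v(t)$; vertex-partition $V(G)$ into $O(\log n)$ layers by the depth of $I(v)$, further split by ``sign'' (whether $\m{x}_v(t)<\m{y}_v(t)$ or $\m{x}_v(t)>\m{y}_v(t)$). Within a single (depth, sign) layer $V'$, the edges of $G[V']$ decompose cleanly: (i) if $I(u)=I(v)$ and both vertices are positive, minimality forces $\m{x}_u(t),\m{x}_v(t)$ into the left half and $\m{y}_u(t),\m{y}_v(t)$ into the right half of the shared $I$, so both $t$-th-coordinate inequalities hold and the $G$-edge coincides with the complexity-$(t-1)$ quasi-comparability condition on the first $t-1$ coordinates; (ii) if $I(u)=I(v)$ and both are negative, neither direction of the $t$-th-coordinate condition can hold, so no edges arise; (iii) if $I(u)\neq I(v)$ (disjoint intervals at the same depth, WLOG $I(u)<I(v)$), the forward $t$-th-coordinate comparison is forced while the backward one fails, so the $G$-edge reduces to the one-sided condition $\m{x}_u\prec_{[t-1]}\m{y}_v$. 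Consequently $G[V']=G_{\text{same}}\cup G_{\text{cross}}$, where $G_{\text{same}}$ is a disjoint union of quasi-comparability graphs of complexity $t-1$ on the same-$I$ classes (handled by induction), and $G_{\text{cross}}$ collects the cross-$I$ edges. Combining colorings via Lemma~\ref{lemma:union} within each layer and summing over the $O(\log n)$ layers with disjoint palettes yields a recursion of the form $\chi_t(s,n)\leq O(\log n)\cdot\chi(G_{\text{cross}})\cdot\chi_{t-1}(s,n)$, which unrolls to the desired bound $\alpha s^\beta(\log n)^\gamma$ with constants depending only on $t$.

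The main obstacle is bounding $\chi(G_{\text{cross}})$: while $\omega(G_{\text{cross}})\leq s-1$ (since $G_{\text{cross}}\subseteq G$), the graph $G_{\text{cross}}$ only embeds as a \emph{subgraph} of the complexity-$(t-1)$ quasi-comparability graph on $V'$, whose clique number can strictly exceed $s-1$ once the ``spurious reverse'' cross-$I$ edges are included, so the inductive hypothesis does not apply to the ambient graph. I would address this either by recognising $G_{\text{cross}}$ itself as a quasi-comparability graph of complexity $t-1$ after appending the $I(v)$-index as an auxiliary coordinate (using Lemma~\ref{lemma:dilworth} to absorb the resulting ``order'' component into a comparability graph of chromatic number at most $s$), or by a direct peeling argument that exploits the total ordering of depth-$d$ dyadic intervals to extract a comparability-graph layer and reduce to a smaller instance. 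A secondary bookkeeping challenge is tracking the constants through the double induction (on $t$ and on $n$) to ensure $\beta$ and $\gamma$ remain polynomial in $t$.
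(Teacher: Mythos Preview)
Your base case $t=1$ is correct, and the dyadic layering idea for the inductive step is natural. However, the gap you flag at $G_{\text{cross}}$ is genuine and your suggested fixes do not close it. Appending the $I$-index as an extra coordinate produces a quasi-comparability graph of complexity $t$, not $t-1$: with $\m{x}'_v=(\m{x}_v|_{[t-1]},\ell(v))$ and $\m{y}'_v=(\m{y}_v|_{[t-1]},\ell(v))$ one has $\m{x}'_u\prec\m{y}'_v$ iff $\ell(u)<\ell(v)$ and $\m{x}_u\prec_{[t-1]}\m{y}_v$, which is exactly the $G_{\text{cross}}$ relation, so the recursion $\chi_t\leq O(\log n)\cdot\chi(G_{\text{cross}})\cdot\chi_{t-1}$ is circular. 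Nor is $G_{\text{cross}}$ a comparability graph that Lemma~\ref{lemma:dilworth} could absorb: already for $t=2$ with positive sign, taking $u=(5,6)$, $v=(0,100)$, $w=(1,2)$ in the first coordinate and $\ell(u)<\ell(v)<\ell(w)$ gives $u\sim v$ and $v\sim w$ in $G_{\text{cross}}$ but $u\not\sim w$, while the only candidate orientation (along increasing $\ell$) is not transitive. The ``peeling along the $I$-order'' idea is too vague to assess; without an analogue of the paper's Lemma~\ref{lemma:intersection} there is no evident way to control $\chi(G_{\text{cross}})$ by $s$ and $\log n$ alone.

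The paper does \emph{not} induct on $t$. Instead it partitions $V(G)$ into the $2^t$ sign-types $\epsilon\in\{-,+\}^t$ all at once, and within a fixed type with $P=\{i:\epsilon(i)=+\}$ it writes $G[V_\epsilon]=\bigcup_{Q\subset P}(G_Q\cap H_Q)$, where $H_Q$ is a genuine comparability graph (the relation $\prec_Q$ is transitive because on coordinates in $Q$ one compares $\m{y}$ to $\m{x}'$ and on coordinates outside $P$ one compares $\m{x}$ to $\m{y}'$, both of which chain) and $G_Q$ is an intersection graph of $(|P|-|Q|)$-dimensional boxes. The entire weight of the argument is then carried by Lemma~\ref{lemma:intersection}, which bounds $\chi$ of the intersection of a comparability graph with a box-intersection graph by $O_d(s(\log n)^d)$; this lemma is precisely the tool that handles the ``one-directional'' edges you are struggling with in $G_{\text{cross}}$. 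Your decomposition isolates a graph of the same complexity you started with, whereas the paper's decomposition isolates graphs of the special form ``comparability $\cap$ box-intersection'', for which a direct divide-and-conquer (on the box side) works.
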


First, let us show that Theorem \ref{thm:quasi} indeed implies Theorem \ref{thm:mainthm}.

\begin{proof}[Proof of Theorem \ref{thm:mainthm}]
By Lemma \ref{lemma:def}, there exist $t'$ and $u$ depending only on $t$ such that $G$ is semilinear* of complexity $(t',u)$. With slight abuse of notation, write $t$ instead of $t'$. Let $\alpha',\beta',\gamma'$ be the constants given by Theorem \ref{thm:quasi} with respect to $t$, instead of $\alpha,\beta,\gamma$, respectively. For $(i,j)\in [u]\times [t]$, let $f_{i,j}:\mathbb{R}^d\times\mathbb{R}^d\rightarrow \mathbb{R}$ be linear functions that define $G$ according to (\ref{equ:def2}). That is, for $\m{x},\m{y}\in V(G)$, $\{\m{x},\m{y}\}$ is an edge if and only if 
$$\bigvee_{i\in [u]}\left(\bigwedge_{j\in [t]}\{f_{i,j}(\m{x},\m{y})<0\}\right)=\mbox{T}.$$ 
Note that as $f_{i,j}$ is linear, we can write $f_{i,j}(\m{x},\m{y})=g_{i,j}(\m{x})+h_{i,j}(\m{y})$ with suitable functions $g_{i,j},h_{i,j}:\mathbb{R}^{d}\rightarrow\mathbb{R}$. For each $\m{x}\in V(G)$, define $\phi_j(\m{x}),\rho_j(\m{x})\in\mathbb{R}^{t}$ such that $\phi_i(\m{x})(j)=g_{i,j}(\m{x})$ and $\rho_i(\m{x})(j)=-h_{i,j}(\m{x})$ for $j\in [t]$. Then $$\bigwedge_{j\in [t]}\{f_{i,j}(\m{x},\m{y})<0\}\equiv \{\phi_i(\m{x})\prec \rho_i(\m{y})\}.$$ Therefore, $\{\m{x},\m{y}\}$ is an edge of $G$ if and only if $\phi_i(\m{x})\prec \rho_i(\m{y})$ or $\phi_i(\m{y})\prec \rho_i(\m{x})$ for some $i\in [t]$. Let $G_i$ be the graph on $V(G)$ in which $\{\m{x},\m{y}\}$ is an edge if and only if $\phi_i(\m{x})\prec \rho_i(\m{y})$ or $\phi_i(\m{y})\prec \rho_i(\m{x})$. Then $G_i$ is isomorphic to a quasi-comparability graph of complexity $t$, and $G=\bigcup_{i\in [t]}G_i$. Therefore, $G_i$ contains no clique of size $s$, which implies $\chi(G_{i})\leq \alpha' s^{\beta'}(\log n)^{\gamma'}$ by Theorem \ref{thm:quasi}. Finally, by applying Lemma \ref{lemma:union}, we get
$$\chi(G)\leq \prod_{i\in [u]}\chi(G_i)\leq (\alpha')^u s^{u\beta'}(\log n)^{u\gamma'}.$$
Hence, the choices $\alpha:=(\alpha')^u$, $\beta:=u\beta'$ and $\gamma:=u\gamma'$ suffice.
\end{proof}

In the rest of this section, we prove Theorem \ref{thm:quasi}. We prepare its proof with the following lemma, which tells us that the intersection of a comparability graph and an intersection graph of boxes can be colored with a few colors.

\begin{lemma}\label{lemma:intersection}
Let $d$ be a positive integer, then there exists $c>0$ such that the following holds. Let $G$ be the intersection graph of $n$ open boxes in $\mathbb{R}^{d}$, and let $H$ be a comparability graph on $V(G)$. If $G\cap H$ contains no clique of size $s$, then  $$\chi(G)\leq cs(\log n)^{d}.$$
\end{lemma}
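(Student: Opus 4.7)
The plan is to prove the lemma by induction on the dimension $d$, using the classical divide-and-conquer on the boxes combined with a base case that extracts the first logarithmic factor from the comparability structure on $H$. The scheme mirrors the standard proof that $\chi\leq O(\omega(\log n)^{d-1})$ for plain box intersection graphs, but the comparability hypothesis allows a base case driven by $s=\omega(G\cap H)$ rather than $\omega(G)$, at the cost of one extra $\log n$ factor.

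\textbf{Base case $d=1$.} Here $G$ is an interval graph. I would establish the bound $O(s\log n)$ via a sweep-line/dyadic decomposition of the real line. At each sweep point $p$, the set of intervals containing $p$ forms a clique in $G$; restricted to $H$ this clique has clique number at most $s$ (any $H$-clique inside a $G$-clique is a $G\cap H$-clique). By Mirsky's theorem (Lemma~\ref{lemma:dilworth}) such a clique can be partitioned into $s$ antichains of $H$. A dyadic layering of the intervals by depth then stitches these local $s$-colorings into a global coloring of size $O(s\log n)$.

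\textbf{Inductive step $d\geq 2$.} Sort the boxes by midpoint in the $d$-th coordinate and let $\Pi$ be the median hyperplane orthogonal to the $d$-th axis. Partition the boxes into $L$ (entirely below $\Pi$), $R$ (entirely above $\Pi$), and $M$ (crossing $\Pi$). Since no $G$-edge joins $L$ to $R$, by induction on $n$ applied separately to $L$ and $R$ (with the same $H$ restricted) the subgraph on $L\cup R$ is colorable with $cs(\log(n/2))^d$ colors. For $M$, every crossing box contains a point of $\Pi$, so two crossing boxes intersect iff their projections onto $\Pi\cong\mathbb{R}^{d-1}$ intersect; this makes $G[M]$ an intersection graph of $|M|\leq n$ boxes in $\mathbb{R}^{d-1}$ with $H$ restricted still a comparability graph, and induction on $d$ gives $cs(\log n)^{d-1}$ colors for $M$. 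Using a palette for $M$ disjoint from the one used on $L\cup R$ yields
\[
\chi(G)\leq cs(\log(n/2))^d+cs(\log n)^{d-1}\leq cs(\log n)^d
\]
for suitably large $c$, using the identity $(\log n)^d-(\log(n/2))^d=\Theta(d(\log n)^{d-1})$ to close the recurrence.

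\textbf{Main obstacle.} The heart of the argument is the base case $d=1$. The extra $\log n$ factor compared to the classical $(\log n)^{d-1}$ bound for box intersection graphs (which requires a clique bound on $G$ itself) must be produced there, by combining the interval structure of $G$ with the comparability structure of $H$ via Mirsky's theorem. The main technical challenge is synchronizing the dyadic sweep-line decomposition with the antichain partition of each active clique, so that the $s$ colors used at each scale can be consistently combined across the $O(\log n)$ scales into a single proper coloring; every other step of the induction then propagates this logarithmic factor cleanly through the standard median-cut recursion.
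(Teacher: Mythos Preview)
Your approach is correct and essentially identical to the paper's: both run the same median-hyperplane divide-and-conquer, recursing on $n$ for the two sides and on $d$ for the boxes crossing the hyperplane, and both bottom out (your $d=1$ ``dyadic layering'', the paper's convention $h_{0,s}(n)=s$) with the observation that intervals through a common point form a $G$-clique on which $G\cap H=H$, so Mirsky gives fewer than $s$ colors there. Note that what is actually being bounded throughout---in your argument and in the paper's---is $\chi(G\cap H)$, not $\chi(G)$ as literally written in the lemma statement; your antichain-of-$H$ color classes are independent in $G\cap H$, not in $G$.
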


\begin{proof}
Let $h_{d,s}(n)$ be the minimal positive integer $\chi$ such that for every $G'$ that is the intersection graph of $n$ boxes in $\mathbb{R}^{d}$, and for every $H'$ that is a comparability graph on $V(G')$, if $G'\cap H'$ contains no clique of size $s$, then its chromatic number is at most $\chi$. We prove by induction on $n$ and $d$ such that $h_{d,s}(n)\leq c(d)s(\log n)^{d},$ where $c(d)>0$ depends only on $d$. Clearly, $h_{d,s}(1)=1$.

Suppose that $G\cap H$ contains no clique of size $s$. Let $\mathcal{B}$ be the set of boxes in $\mathbb{R}^{d}$, whose intersection graph is $G$. Given $h\in\mathbb{R}$, let $\mathcal{B}(h)^{-}$ be the set of boxes in $\mathcal{B}$ that are contained in the half-space $\{x(d)<h:x\in \mathbb{R}^d\}$, and let $\mathcal{B}(h)^{+}$ be the set of boxes in $\mathcal{B}$ that are contained in the half-space $\{x(d)>h:x\in \mathbb{R}^d\}$. Finally, let $\mathcal{B}(h)$ be the set of boxes that intersect the hyperplane $\mathcal{H}(h)=\{x(d)=h:x\in \mathbb{R}^d\}$. It is easy to see that there exists $h$ such that $|\mathcal{B}(h)^{-}|,|\mathcal{B}(h)^{+}|\leq n/2$. Let $\mathcal{B}'=\{B\cap \mathcal{H}(h):B\in \mathcal{B}\}$, then $\mathcal{B}'$ is a set of $(d-1)$-dimensional boxes, whose intersection graph is isomorphic to the  intersection graph of $\mathcal{B}(h)$.

Let $G^{-}$, $G^{+}$ and $G^{0}$ be the subgraphs of $G$ induced by the sets $\mathcal{B}(h)^{-},\mathcal{B}(h)^{+}$ and $\mathcal{B}(h)$, respectively, and let $H^{-},H^{+},H^{0}$ be the subgraphs of $H$ induced by $\mathcal{B}(h)^{-},\mathcal{B}(h)^{+}$ and $\mathcal{B}'$, respectively. Note that if $B\in \mathcal{B}(h)^{-}$ and $B'\in \mathcal{B}(h)^{+}$, then $B$ and $B'$ are disjoint, so there is no edge between $B$ and $B'$ in $G\cap H$. Therefore, the subgraph of $G\cap H$ induced on $V(G^{-})\cup V(G^{+})$ can be properly colored by $$\max\{\chi(G^{-}\cap H^{-}),\chi(G^{+}\cap H^{+})\}\leq h_{d,s}\left(\frac{n}{2}\right)$$ colors.

Also, if  $d=1$, then any two boxes in $\mathcal{B}(h)$ (which are actually intervals) have a nonempty intersection. Hence, $G^{0}\cap H^{0}$ is equal to $H^{0}$ on the vertex set $\mathcal{B}(h)$. As $H^{0}$ is a comparability graph which contains no clique of size $s$, we have $\chi(G^{0}\cap H^{0})<s$. Also, if $d>1$, then  $\chi(G^{0}\cap H^{0})<h_{d-1,s}(n)$. Extend the definition of $h_{d,s}(n)$ for $d=0$ as $h_{0,s}(n)=s$, and let $c(0)=1$. Then for $d\geq 1$, we can write
$$\chi(G\cap H)\leq h_{d,s}\left(\frac{n}{2}\right)+h_{d-1,s}(n)\leq c(d)s\left(\log \frac{n}{2}\right)^{d}+c(d-1)s(\log n)^{d-1}<c(d)s(\log n)^{d},$$
where the last inequality holds if $c(d)$ is sufficiently large compared to $c(d-1)$.
\end{proof}

Now we are ready to prove Theorem \ref{thm:quasi}.

\begin{proof}[Proof of Theorem \ref{thm:quasi}]
We can assume that for every $(\m{x},\m{y})\in V(G)$ we have $\m{x}(i)\neq \m{y}(i)$. Indeed, we can decrease each coordinate of $\m{y}$ by some small amount without changing the quasi-comparability graph generated by the points. For $\epsilon\in \{-,+\}^{t}$, say that a point $(\m{x},\m{y})$ has type $\epsilon$, if the sign of $\m{y}(i)-\m{x}(i)$ is $\epsilon(i)$ for $i\in [t]$, and let $V_{\epsilon}$ be the set of elements of $V$ of type $\epsilon$.

Fix some $\epsilon\in \{-,+\}^{t}$. Let $P\subset [t]$ be the set of indices $i$ such that $\epsilon(i)=+$. For $Q\subset P$, define the graphs $G_{Q}$ and $H_Q$ on $V_{\epsilon}$ as follows. If $(\m{x},\m{y}),(\m{x}',\m{y}')\in V_{\epsilon}$, then join $(\m{x},\m{y})$ and $(\m{x}',\m{y}')$ by an edge in $G_Q$ if for every $i\in P\setminus Q$, the open intervals $(\m{x}(i),\m{y}(i))$ and $(\m{x}'(i),\m{y}'(i))$ have a nonempty intersection. Then $G_Q$ is the intersection graph of $(|P|-|Q|)$-dimensional open boxes. Also, write $(\m{x},\m{y})\prec_{Q} (\m{x}',\m{y}')$  if and only if 
\begin{itemize}
    \item for $i\in [t]\setminus P$, $\m{x}(i)<\m{y}'(i)$ (which then implies $\m{y}(i)< \m{x}(i)<\m{y}'(i)< \m{x}'(i)$),
    \item for $i\in Q$, $\m{y}(i)<\m{x}'(i)$ (which then implies $\m{x}(i)< \m{y}(i)<\m{x}'(i)< \m{y}'(i)$). 
\end{itemize}
Then $\prec_Q$ is a partial order. Let $H_{Q}$ be the comparability graph of $\prec_{Q}$.

We claim that $G[V_{\epsilon}]=\bigcup_{Q\subset P}(G_{Q}\cap H_Q)$. Indeed, it is easy to check that if $(\m{x},\m{y}),(\m{x}',\m{y}')\in V_\epsilon$, then  $\m{x}\prec \m{y}'$ if and only if there exists a unique $Q\subset P$ such that $(\m{x},\m{y})\prec_Q (\m{x}',\m{y}')$ and $\{(\m{x},\m{y}),(\m{x}',\m{y}')\}\in E(G_{Q})$. But then, for every $Q\subset P$, $G_Q\cap H_Q$ has no clique of size $s$. Applying Lemma \ref{lemma:intersection}, we get  that there exists some constant $c=c(t)>0$ such that $$\chi(G_Q\cap H_Q)\leq cs(\log n)^{|P|-|Q|}<cs(\log n)^t.$$
By Lemma \ref{lemma:union}, we get
$$\chi(G[V_{\epsilon}])\leq \prod_{Q\subset P}\chi(G_Q\cap H_Q)\leq c^{2^t}s^{2^t}(\log n)^{t2^t}.$$
This finishes the proof as
$$\chi(G)\leq \sum_{\epsilon\in\{-,+\}^{t}}\chi(G[V_{\epsilon}])\leq 2^t c^{2^t} s^{2^t}(\log n)^{t2^t},$$
so the choices $\alpha:=2^t c^{2^t}$, $\beta:=2^{t}$ and $\gamma:=t2^t$ suffice.
\end{proof}

\section{Symmetric Ramsey properties of semilinear graphs}\label{sect:sym}

In this section, we prove Theorem \ref{thm:sym}. The heart of the proof is the following theorem about quasi-comparability graphs, which might be of independent interest.

\begin{theorem}\label{thm:quasisym}
 There exists $c>0$ such that the following holds. Let $t$ be a positive integer, and let $G$ be a quasi-comparability graph of complexity $t$ on $n$ vertices. Then $G$ contains either a clique or an independent set of size at least $\alpha n^{c},$ where $\alpha=\alpha(t)>0$.
\end{theorem}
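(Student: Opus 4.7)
I will prove Theorem \ref{thm:quasisym} by induction on the complexity $t$, with the base case $t=0$ being trivial. For the inductive step, I first use the sign-type decomposition from Section \ref{sect:color}: partition $V$ into $2^t$ sign-type classes and restrict to the largest, losing a factor of $2^t$ which can be absorbed into $\alpha(t)$. WLOG every $v\in V$ satisfies $x_v\prec y_v$ coordinate-wise.

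Next I introduce the partial orders $\preceq_x$ and $\preceq_y$ on $V$, defined by $v\preceq_x v'$ iff $x_v\preceq x_{v'}$ (coordinate-wise in $\mathbb{R}^t$), and analogously for $\preceq_y$; each has Dushnik--Miller dimension at most $t$. The key observation is that $\preceq_x$-comparability forces an edge in $G$: if $v\prec_x v'$, then at every coordinate $k$ we have $x_v(k)\leq x_{v'}(k)<y_{v'}(k)$ (using $x_{v'}\prec y_{v'}$), so $x_v\prec y_{v'}$ and hence $\{v,v'\}\in E(G)$; the analogous statement holds for $\preceq_y$. In particular, any chain in $\preceq_x$ or $\preceq_y$ is a clique in $G$.

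By Mirsky's theorem (the dual of Dilworth, cf.\ Lemma \ref{lemma:dilworth}), any $m$-element poset has a chain or antichain of size $\sqrt{m}$. I apply this first to $(V,\preceq_x)$: either a $\preceq_x$-chain yields a clique in $G$ of size $\sqrt{n}$ and we conclude with $c=1/2$, or we obtain an antichain $A\subseteq V$ of size $\sqrt{n}$. Apply Mirsky again inside $A$, now to $\preceq_y$: either a $\preceq_y$-chain yields a clique of size $n^{1/4}$, done; or we find a subset $A'\subseteq A$ of size $n^{1/4}$ on which both $\{x_v:v\in A'\}$ and $\{y_v:v\in A'\}$ form antichains in $(\mathbb{R}^t,\preceq)$.

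The main obstacle is the double-antichain case $A'$, and it is the crux of the proof: one must show that $G|_{A'}$ still contains a clique or independent set of size $\alpha_0 |A'|^{c'}$ for some universal $c'>0$, ideally by exploiting the fact that an antichain in $\mathbb{R}^t$ has ``effective dimension'' at most $t-1$ (via its staircase parameterization), so that $G|_{A'}$ can be recast as a quasi-comparability graph of complexity $t-1$ and the induction hypothesis applies. The delicate point is that the induction must be set up so that $c$ is preserved across dimensions rather than halving at each step---otherwise $c$ would depend on $t$, which is not permitted. Getting this exponent-preserving dimension reduction right (possibly by bounding the total number of Mirsky iterations by an absolute constant, or by a cleverer projection that avoids repeatedly halving the exponent via Mirsky) is the part of the argument I expect to require the most care.
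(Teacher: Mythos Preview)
Your outline correctly isolates the sign-type reduction and the observation that $\preceq_x$- or $\preceq_y$-chains are cliques in $G$, but it stops precisely at the hard part. In the double-antichain case you propose to ``recast $G|_{A'}$ as a quasi-comparability graph of complexity $t-1$'' via a staircase parameterization; this is not carried out, and even if it could be, your two Mirsky steps have already shrunk the exponent by a factor of $4$ before any dimension reduction. The resulting recursion would give $c(t)=c(t-1)/4$, i.e.\ $c(t)=4^{-\Theta(t)}$, contradicting the requirement that $c$ be absolute. You yourself flag this, but there is no indication of how to avoid it, and I do not see how a Mirsky-based argument can: each Mirsky application inherently halves the exponent, and you must apply it at least once per dimension.

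The paper's proof avoids Mirsky entirely after the sign-type reduction and instead builds a large \emph{cograph}. It cuts each coordinate at a median $p_i$, so that (after discarding the thin ``straddling'' sets $V_i^0$) every remaining vertex lies in some $V_A$, $A\subseteq[t]$. The structural point is that $V_A$ and $V_B$ span no edges when $A,B$ are incomparable in $2^{[t]}$, and span all edges when $\{A,B\}=\{\emptyset,[t]\}$. The engine that replaces your missing step is Lemma~\ref{lemma:EH}: for any balanced weight on $2^{[t]}$ of total mass $\geq 9/10$, either $\omega(\emptyset),\omega([t])\geq 1/10$, or there exist pairwise cross-incomparable families $\mathcal{A}_1,\dots,\mathcal{A}_s$ with $\sum_i\omega(\mathcal{A}_i)^{c}\geq 1$ for an \emph{absolute} $c>0$. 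In the first alternative one takes the complete join of the two recursively obtained cographs; in the second, the disjoint union, and the inequality $\sum_i\omega(\mathcal{A}_i)^{c}\geq 1$ is exactly what makes the exponent survive the recursion: the cograph in $\bigcup_{A\in\mathcal{A}_i}V_A$ has size at least $\beta(\omega(\mathcal{A}_i)n)^{c}$, and summing over $i$ recovers $\beta n^{c}$. This superadditivity across pieces is the idea your sketch lacks; it does not come from Dilworth/Mirsky, and without it (or a genuine substitute) the argument does not close.
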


Let us prepare the proof of this theorem. The \emph{Boolean lattice $2^{[t]}$} is the family of all subsets of $[t]$ ordered by inclusion.  For $i\in [t]$, let $F^{-}_t(i)=\{A\in 2^{[t]}:i\not\in A\}$ and $F^{+}_t(i)=\{A\in 2^{[t]}:i\in A\}$. Let $\omega:2^{[t]}\rightarrow \mathbb{R}_{\geq 0}$ be a weight function. If $\mathcal{A}\subset 2^{[t]}$, then the weight of $\mathcal{A}$ is $\omega(\mathcal{A})=\sum_{A\in\mathcal{A}}\omega(A)$. Say that $\omega$ is \emph{balanced}, if $\omega(F_{t}^{-}(i))\leq 1/2$ and $\omega(F_{t}^{+}(i))\leq 1/2$ for every $i\in [t]$. The following lemma is closely related to Theorem 3. in \cite{T20}, where it was used in a Ramsey type result concerning intersection graphs of curves.

\begin{lemma}\label{lemma:EH}
There exists $c>0$ such that the following holds. Let $t\geq 2$ be a positive integer and let $\omega:2^{[t]}\rightarrow \mathbb{R}_{\geq 0}$ be a balanced weight function such that $\omega(2^{[t]})\geq 9/10$. Then either
\begin{description}
    \item[(i)] $\omega(\emptyset)\geq 1/10$ and $\omega([t])\geq 1/10$, or
    \item[(ii)] there exist a positive integer $s$ and $s$ disjoint families $\mathcal{A}_1,\dots,\mathcal{A}_s\subset 2^{[t]}$ such that $$\sum_{i=1}^{s}\omega(\mathcal{A}_i)^{c}\geq 1,$$
    and for every $1\leq i<j\leq t$, every $A\in\mathcal{A}_i$ is incomparable to every $B\in \mathcal{A}_j$.
\end{description}
\end{lemma}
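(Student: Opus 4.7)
The plan is to prove Lemma~\ref{lemma:EH} by induction on $t$, with the balanced hypothesis (possibly in a slightly relaxed form) as the inductive invariant.

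For the base case $t=2$, if (i) fails then applying the complement symmetry $A \mapsto [t] \setminus A$ (which preserves balancedness and swaps $\emptyset$ with $[t]$) lets me assume $\omega(\emptyset) < 1/10$. The balanced bounds $\omega(F_2^-(i)) \leq 1/2$ combined with $\omega(2^{[2]}) \geq 9/10$ then force $\omega(\{1\}), \omega(\{2\}) \geq 3/10$ by an immediate calculation: $\omega(\{1\}) \geq \omega(2^{[2]}) - \omega(\emptyset) - (\omega(\{2\}) + \omega(\{1,2\})) \geq 9/10 - 1/10 - 1/2 = 3/10$, and symmetrically for $\omega(\{2\})$. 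Hence the incomparable singleton families $\mathcal{A}_1 = \{\{1\}\}$ and $\mathcal{A}_2 = \{\{2\}\}$ satisfy $\omega(\mathcal{A}_1)^c + \omega(\mathcal{A}_2)^c \geq 2\,(3/10)^c \geq 1$ for any $c \leq \log 2/\log(10/3)$, giving (ii).

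For the inductive step, after possibly applying the complement symmetry, assume (i) fails and $\omega(\emptyset) < 1/10$, so each half-space $F_t^\pm(i)$ has weight in $[4/10, 1/2]$. Pick a coordinate $i^*$; the order-isomorphisms $F_t^\pm(i^*) \to 2^{[t-1]}$ turn the restrictions of $\omega$ to the two halves into weight functions $\omega^-, \omega^+$ on $2^{[t-1]}$. Apply the (strengthened) inductive hypothesis to each, and pull the resulting families back into $2^{[t]}$ through the isomorphisms; inside each half-space they remain fully incomparable. The crucial structural observation for combining families from different halves is that for $A \in F_t^-(i^*)$ and $B \in F_t^+(i^*)$ only the direction $A \subseteq B$ can hold (since $i^* \in B \setminus A$). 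I would neutralize this with a size-threshold trick: include only ``large'' sets from $F_t^-(i^*)$ and only ``small'' sets from $F_t^+(i^*)$, with the threshold chosen via pigeonhole so that $|A| > |B|$ holds across the split while retaining constant fractions of the weights on both sides. When the inductive hypothesis returns case~(i) on one of the halves, the two corner sets of the sub-lattice translate to incomparable singletons in $2^{[t]}$ of substantial weight, directly yielding~(ii).

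The main obstacle is that restriction degrades the balanced condition: after rescaling $\omega^\pm$ to satisfy the total-mass threshold, the balanced bounds blow up by a factor up to $5/2$. To close the induction with a universal exponent $c > 0$, I would prove a strengthened statement allowing ``$K$-balanced'' weight functions for some fixed $K > 1$, and then calibrate the three parameters — the total-mass threshold, the dichotomy threshold (here $1/10$), and $K$ — so that one restriction step stays within the strengthened hypothesis. The remaining work, namely the combinatorial bookkeeping for the size-threshold trick and tracking the weight losses across the recursion, determines the final value of $c$.
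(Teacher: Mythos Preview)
Your base case is fine and essentially matches the paper. The inductive step has a genuine gap. Restriction to a half-space $F_t^{\pm}(i^*)$ does not preserve any fixed ``$K$-balanced'' hypothesis: for instance, with $t=3$ and $\omega$ placing mass $1/2$ on each of $\{1\}$ and $\{2,3\}$, the function is balanced, but restricting to $F_3^-(3)$ and rescaling to total mass $9/10$ gives a weight on $2^{[2]}$ with $\omega^-(F_2^+(1)) = 9/10$; iterating, the blow-up compounds and no single $K$ absorbs it. If instead you do not rescale, balancedness survives but the total-mass lower bound drops from $9/10$ to at best $4/10$ after one step and to nothing after two. Either way the induction does not close with a universal~$c$. (Your size-threshold combination step is also lossy --- a pigeonhole on $|A|$ retains only a $1/t$, not a constant, fraction of the weight --- but this is secondary.)

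The paper avoids the issue entirely by \emph{collapsing} a coordinate rather than splitting on it: set $\omega'(A)=\omega(A)+\omega(A\cup\{t\})$ on $2^{[t-1]}$. This preserves balancedness and total mass exactly, and pairwise-incomparable families $\mathcal{A}'_i\subset 2^{[t-1]}$ lift to pairwise-incomparable families $\{A,A\cup\{t\}:A\in\mathcal{A}'_i\}\subset 2^{[t]}$ with the same weights, so alternative~(ii) transfers verbatim. The only cost is that $\omega'(\emptyset)=\omega(\emptyset)+\omega(\{t\})$ and $\omega'([t-1])=\omega([t])+\omega([t]\setminus\{t\})$. To keep these extra terms small, one first uses the failure of~(ii): if at least $t/2$ of the singletons $\{i\}$ had weight $\ge (2/t)^{10}$ they would already witness~(ii) with $c=1/10$, so more than $t/2$ have small weight, and likewise (by complementation) for the co-singletons $[t]\setminus\{i\}$; hence some index has both small, and one collapses that index. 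This yields the strengthened inductive statement $\omega(\emptyset),\omega([t])\ge 1/5-\sum_{i=3}^t(2/i)^{10}$, and the proof closes with $c=1/10$.
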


\begin{proof}
We show that $c=1/10$ suffices. We prove the following slightly stronger result by induction on $t$. Either 
\begin{description}
     \item[(i)'] $\omega(\emptyset),\omega([t])\geq 1/5-\sum_{i=3}^{t}(2/i)^{10}$
\end{description}
or (ii) holds. Note that $\sum_{i=3}^{\infty}(2/i)^{10}<1/10$, so this indeed implies our lemma.

Consider the base case $t=2$, and suppose that (i)' does not hold. Then either $\omega(\emptyset)<1/5$, or $\omega(\{1,2\})<1/5$. Without loss of generality, suppose that $\omega(\emptyset)<1/5$.  We have $$\frac{9}{10}\leq \omega(2^{[2]})=\omega(\emptyset)+\omega(\{1\})+\omega(\{2\})+\omega(\{1,2\}).$$
Also, as $\omega$ is balanced, we have $\omega(\{1\})+\omega(\{1,2\})\leq 1/2$ and $\omega(\{2\})+\omega(\{1,2\})\leq 1/2$, which implies $\omega(\{1\})+\omega(\emptyset)\geq 4/10$ and $\omega(\{2\})+\omega(\emptyset)\geq 4/10$. But then $\omega(\{1\}),\omega(\{2\})\geq 1/5$, so $s=2$ and the families $\mathcal{A}_1=\{\{1\}\}$ and $\mathcal{A}_2=\{\{2\}\}$ satisfy (ii), noting that $(1/5)^{1/10}+(1/5)^{1/10}>1$.

Now let $t\geq 3$ and suppose that (ii) does not hold. Consider the weights of the single element sets $\{i\}$ for $i\in [t]$. Suppose that at least $t/2$ of these sets have weight at least $(2/t)^{10}$. Then taking $s=\lceil t/2\rceil$ and $\mathcal{A}_1,\dots,\mathcal{A}_{s}$ to be families containing exactly one of such sets, we get $\sum_{i=1}^{s}\omega(\mathcal{A}_{i})^{1/10}\geq s\cdot (2/t)\geq 1$. This contradicts that (ii) does not hold, therefore, we can assume that more than $t/2$ of the weights $\omega(\{1\}),\dots,\omega(\{t\})$ are at most $(2/t)^{10}$.
Now consider the weights of the $(t-1)$-element sets. By taking complements and repeating the same argument as before, we can also assume that more than $t/2$ of the weights  $\omega([t]\setminus\{1\}),\dots,\omega([t]\setminus\{t\})$ are at most $(2/t)^{10}$. But then there exists $i\in [t]$ such that $\omega(\{i\}),\omega([t]\setminus\{i\})\leq (2/t)^{10}$. Without loss of generality, we can assume that $i=t$.

Now define the weight function $\omega':2^{[t-1]}\rightarrow\mathbb{R}_{\geq 0}$ as follows. For $A\subset [t-1]$, let $\omega'(A)=\omega(A)+\omega(A\cup\{t\})$. The following properties of $\omega'$ are easy to check: $\omega'(2^{[t-1]})=\omega(2^{[t]})$, $\omega'$ is balanced, and $\omega'$ does not satisfy (ii). The latter is true as if $s$ and $\mathcal{A}'_1,\dots,\mathcal{A}'_s\subset 2^{[t-1]}$ satisfy (ii) with respect to $\omega'$, then the $s$ families $\mathcal{A}_{i}=\{A,A\cup\{t\}:A\in\mathcal{A}_i'\}$ satisfy (ii) with respect to $\omega$. Hence, by our induction hypothesis, we have $\omega'(\emptyset),\omega'([t-1])\geq 1/10-\sum_{i=3}^{t-1}(2/i)^{10}$. But note that $\omega(\emptyset)=\omega'(\emptyset)-\omega(\{t\})\geq \omega'(\emptyset)-(2/t)^{10}$ and $\omega([t])=\omega'([t-1])-\omega([t]\setminus\{t\})\geq \omega'(\emptyset)-(2/t)^{10}$, so $\omega$ satisfies (i). This finishes the proof.
\end{proof}

Now we are ready to prove Theorem \ref{thm:quasisym}. The family of \emph{cographs} is the smallest family of graphs which contains the single vertex graph, and is closed under complementation and taking disjoint unions. It is well known that cographs are \emph{perfect}, that is, their chromatic number is equal to their clique number. In particular, any cograph on $n$ vertices contains either a clique or an independent set of size at least $n^{1/2}$. We will show that quasi-comparability graphs contain large cographs.

\begin{proof}[Proof of Theorem \ref{thm:quasisym}]

Similarly as in the proof of Theorem \ref{thm:quasi}, we can assume that for every $(\m{x},\m{y})\in V(G)$ we have $\m{x}(i)\neq \m{y}(i)$. Also, for $\epsilon\in \{-,+\}^{t}$, say that a vertex $(\m{x},\m{y})$ has type $\epsilon$, if the sign of $\m{y}(i)-\m{x}(i)$ is $\epsilon(i)$ for $i\in [t]$. Let $c'$ be the constant $c$ given by Lemma \ref{lemma:EH}, and let $\gamma=\min\{c',1/3\}$. Also, set $\beta(1)=1$, and define $\beta(t)=\frac{\alpha(t-1)}{(10t)^c}$. We prove that if $G$ is a quasi-comparability graph of complexity $t$ on $n$ vertices such that every vertex in $G$ has the same type, then $G$ contains a cograph of size at least $\beta(t) n^{\gamma}$. We proceed by induction on $t$, and while $t$ is fixed, we proceed by induction on $n$. First, consider the base case $t=1$. If every vertex of $G$ has type +, then $G$ is the complete graph, and if every vertex  has type -, then $G$ is the intersection graph of the open intervals $(y,x)$ for $(x,y)\in V(G)$. Hence, in both cases $G$ is perfect, so $G$ contains either a clique or an independent set of size at least $n^{1/2}>\beta(1)n^{\gamma}$, which is also a cograph.

Now let $t\geq 2$, and let $\epsilon$ be the type of the vertices of $G$. As the single vertex graph is a cograph, this takes care of the base case $n=1$, so we can assume $n\geq 2$. For $i\in [t]$, there exist $p_i\in \mathbb{R}$ such that  at most $n/2$ of the vertices $(\mathbf{x},\mathbf{y})\in V(G)$ satisfy $\mathbf{x}(i),\mathbf{y}(i)\leq p_i$, and at most $n/2$ of the vertices satisfy $p_i \leq \mathbf{x}(i),\mathbf{y}(i)$. Let $V_i^{-}\subset V$ be the set of vertices $(\mathbf{x},\mathbf{y})$ satisfying $\mathbf{x}(i),\mathbf{y}(i)\leq p_i$,  let $V_i^{+}\subset V$ be the set of vertices satisfying $p_i\leq \mathbf{x}(i),\mathbf{y}(i)$, and let $V_{i}^{0}\subset V$ be the rest of the vertices.  Consider two cases. 

\textbf{Case 1.} There exists $i\in [t]$ such that $|V_{i}^{0}|\geq n/10t$. Note that if $\epsilon(i)=-$, then $G[V_i^{0}]$ is the empty graph, and if $\epsilon(i)=+$, then $G[V_i^{0}]$ is a quasi-comparability graph of complexity $t-1$ in which every vertex has the same type. Therefore, $G[V_i^0]$ contains a cograph of size at least $\beta(t-1)(n/10t)^\gamma=\beta(t) n^\gamma.$

\textbf{Case 2.} For every $i\in [t]$, we have $|V_{i}^{0}|<n/10t$. For simplicity, write $\beta$ instead of $\beta(t)$. For every $A\in 2^{[t]}$, let $$V_{A}=\bigcap_{i\in A}V_{i}^{+}\cap \bigcap_{i\in [t]\setminus A}V_{i}^{-}.$$
Note that $\{V_A\}_{A\subset [t]}$ forms a partition of  the set $U=V(G)\setminus (\bigcup_{i=1}^{t} V_{i}^{0})$, where 
$$|U|\geq n-\sum_{i=1}^{t}|V_i^{0}|\geq n-t\cdot \left(\frac{n}{10t}\right)\geq \frac{9n}{10}.$$ Also, note that if $A,B\in 2^{[t]}$ and $u\in V_{A}$ and $v\in V_{B}$, then there is no edge between $u$ and $v$ in $G$ if $A$ and $B$ are incomparable, and there is an edge between $u$ and $v$ if $\{A,B\}=\{\emptyset,[t]\}$. Consider the weight function $\omega:2^{[t]}\rightarrow\mathbb{R}_{\geq 0}$ defined as $\omega(A)=|V_A|/n$. Then $\omega(2^{[t]})\geq 9/10$, and $\omega$ is balanced as $\omega(F_{t}^{-}(i))=|V_{i}^{-}|/n$ and $\omega(F_{t}^{+}(i))=|V_{i}^{+}|/n$. Therefore, we can apply Lemma \ref{lemma:EH} to conclude that either (i) $\omega(\emptyset),\omega([t])\geq 1/10$, or (ii) there exist a positive integer $s$ and $s$ disjoint families $\mathcal{A}_1,\dots,\mathcal{A}_s\subset 2^{[t]}$ such that $$\sum_{i=1}^{s}\omega(\mathcal{A}_i)^{\gamma}\geq 1,$$
and for every $1\leq i<j\leq t$, every $A\in\mathcal{A}_i$ is incomparable to every $B\in \mathcal{A}_j$.
    
First, suppose that (i) holds. Then $V_{\emptyset}$ and $V_{[t]}$ both contain a cograph of size at least $\beta (n/10)^{\gamma}$. As every vertex in $V_{\emptyset}$ is connected to every vertex of $V_{[t]}$ by an edge, the union of these cographs is also a cograph. Therefore, $G$ contains a cograph of size at least $2\beta(n/10)^\gamma>\beta n^\gamma$.

Now consider the case if (ii) holds. For $i\in [s]$, the set $U_i=\bigcup_{A\in \mathcal{A}_i}V_{A}$ contains a cograph of size at least $\beta(\omega(\mathcal{A}_i)n)^{\gamma}$. Note that for $i\neq j$, there are no edges between $U_i$ and $U_j$, so the union of these cographs is a cograph as well, and its size is at least $$\sum_{i=1}^{s}\beta (\omega(\mathcal{A}_i)n)^{\gamma}\geq \beta n^\gamma.$$

We proved that every quasi-comparability graph of complexity $t$ on $n$ vertices in which the vertices have the same type contains a cograph of size at least $\beta n^{\gamma}$. But if $G$ is a quasi-comparability graph of complexity $t$ on $n$ vertices, then at least $n/2^t$ vertices have the same type. Also, every cograph on $m$ vertices contains a clique or an independent set of size at least $m^{1/2}$. Therefore, $G$ contains a clique or an independent set of size at least $\beta^{1/2}(n/2^t)^{\gamma/2}$. Hence, the choices $c=\gamma/2$ and $\alpha=\beta^{1/2}/2^{t\gamma/2}$ suffice.
\end{proof}

After these preparations, we get the proof of Theorem \ref{thm:sym} almost immediately.

\begin{proof}[Proof of Theorem \ref{thm:sym}]
Let $\alpha'=\alpha'(t)$ and $c$ be the constants given by Theorem \ref{thm:quasi}.  Similarly as in the proof of Theorem \ref{thm:mainthm}, if $G$ is semilinear of complexity $(t,u)$, then $G$ is the union of $u$ quasi-comparability graphs $G_1,\dots,G_u$ of complexity $t$. 

Therefore, $R_{t,1}(n)\leq (n/\alpha)^{1/c}$. Also, 
$$R_{t,u}(n)\leq R_{t,u-1}(R_{t,1}(n),R_{t,1}(n)).$$
Indeed, if $G$ has at least $R_{t,u-1}(R_{t,1}(n),R_{t,1}(n))$ vertices, then the graph $G_1\cup\dots\cup G_{u-1}$ contains either a clique or an independent set of size at least $R_{t,1}(n)$. A clique in this graph is also a clique in $G$. However, if it contains and independent set $I$ of size at least $R_{t,1}(n)$, then $G_u[I]$ contains either a clique or an independent set of size $n$, which is also a clique or an independent set of size $n$ in $G$.

But then by a simple induction argument, there exist $\alpha=\alpha(t,u)$ and $\beta=\beta(u)$ such that $$R_{t,u}(n)\leq \alpha n^{\beta}.$$
\end{proof}

\section{Constructions}\label{sect:construction}

In this section, we prove Theorems \ref{thm:mainlower} and \ref{thm:lower2}. Let us start with Theorem \ref{thm:mainlower}. We shall build on the following construction of Basit et al. \cite{BCSTT20}, which provides a $K_{2,2}$-free semilinear graph of constant complexity with superlinear number of edges.

\begin{lemma}\label{lemma:construction}
There exists $c_0>0$ such that the following holds. Let $n$ be a positive integer, then there exists a graph $G$ such that $G$ is the incidence graph of $n$ points and $n$ open rectangles in $\mathbb{R}^2$, $G$ is $K_{2,2}$-free, and $G$ has at least $c_{0}n\log n/\log\log n$ edges.
\end{lemma}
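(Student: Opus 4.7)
The plan is to give an explicit construction. Fix a parameter $k\asymp \log n/\log\log n$. I would arrange $n$ points on a sparsified two-dimensional integer grid and select $n$ open axis-parallel rectangles so that each rectangle contains exactly $k$ of the points, yielding $\Omega(nk)=\Omega(n\log n/\log\log n)$ incidences. The $K_{2,2}$-freeness is ensured by arranging that any two rectangles share at most one common point.

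The combinatorial core is a Sidon-type design: a family $\mathcal{F}$ of $n$ size-$k$ subsets (``blocks'') of an $n$-element ground set $X$, any two blocks meeting in at most one element. Natural candidates are lines of an affine plane $\mathrm{AG}(2,q)$ for an appropriate prime power $q$, iterated Sidon sets, or divisor-parameterized families; any of these makes the bipartite incidence graph between $X$ and $\mathcal{F}$ automatically $K_{2,2}$-free, with the desired edge count once parameters are matched so that $|X|=|\mathcal{F}|=n$ and the block size is $k$.

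The technical step is the geometric realization: placing $X$ in the plane and choosing rectangles so that the abstract incidence graph is realized by genuine point-in-rectangle relations. The idea is to assign coordinates to the points of $X$ following a lacunary schedule aligned with the block structure of $\mathcal{F}$, so that for each $F\in\mathcal{F}$ the minimal axis-parallel bounding box of $F$ contains precisely the points of $F$ and no foreign point of $X$.

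The main obstacle is precisely this compatibility between $\mathcal{F}$ and axis-parallel hulls: rectangles are rigid objects (a rectangle necessarily contains every point of its rectangular hull), so a generic Sidon-type design fails to embed at block size much larger than a single point. Overcoming this forces one to choose $\mathcal{F}$ with an ``interval-like'' or multiplicatively structured form, which is where the bound $\log n/\log\log n$---rather than the Steiner system bound $\sqrt{n}$---enters. I would follow the divisor/multiplicative parameterization of Basit, Chernikov, Starchenko, Tao and Tran, in which blocks correspond to arithmetic structures whose rectangular hulls can be controlled, balancing combinatorial richness against geometric rigidity at exactly the scale $k\asymp \log n/\log\log n$.
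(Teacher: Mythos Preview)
The paper does not prove this lemma at all; it is quoted as a black box from Basit, Chernikov, Starchenko, Tao and Tran \cite{BCSTT20}. So there is no ``paper's own proof'' to match, and in that sense your proposal is on equal footing: you also end by deferring to \cite{BCSTT20}. What the paper \emph{does} reveal, however, in the course of proving the girth generalization (Theorem~\ref{thm:girthedges}), is what the BCSTT construction actually looks like: one defines an operation $G\otimes k$ (take $k$ disjoint copies of $G=(A,B;E)$ and add, for each $a\in A$, a new rectangle containing exactly the $k$ copies of $a$), and then iterates $((K_{1,k}\otimes k)\otimes\cdots)\otimes k$ roughly $k\approx \log n/\log\log n$ times. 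The geometric realization is trivial at each step (Lemma~\ref{lemma:realize}), and $K_{2,2}$-freeness is preserved inductively.

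Your proposal, by contrast, is not yet a proof. The first two paragraphs head toward a Sidon-type block design (affine-plane lines, etc.), which you then correctly diagnose as incompatible with axis-parallel rectangular hulls at block size $\omega(1)$; this is a fair observation but leaves you with nothing concrete. The final paragraph invokes a ``divisor/multiplicative parameterization'' of \cite{BCSTT20}, but that does not describe their construction: there is no arithmetic or multiplicative structure, only the recursive $\otimes k$ operation above. If you want to sketch an actual argument, replace the Sidon discussion with the iterated $\otimes k$ construction and verify (i) realizability by points and rectangles, (ii) preservation of $K_{2,2}$-freeness, and (iii) the edge-count recursion, all of which are short.
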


In order to get a triangle-free graph with small independence number, we use a technique of Alon and Pudl\'ak \cite{AP01}, which lets us transform a $K_{2,2}$-free graph with many edges into such a graph. Fortunately, this transformation preserves semilinearity as well.

If $G=(A,B;E)$ is a bipartite graph, and $\leq_{A},\leq_{B}$ are linear orders on $A$ and $B$, respectively, then $(G,\leq_A,\leq_B)$ is an \emph{ordered bipartite graph}. The \emph{super-line graph} of the ordered bipartite graph $(G,\leq_A,\leq_B)$ is the graph $H$, whose vertices are the edges of $G$, and for $u,u'\in A$ and $v,v'\in B$, if $\{u,v\},\{u',v'\}\in E(G)$, then $\{u,v\}$ and $\{u',v'\}$ are joined by an edge in $H$ if $u<_{A} u'$, $v<_{B} v'$, and $\{u,v'\}\in E(G)$. The following properties of the super-line graph can be found either in \cite{AP01} or \cite{ST21}.

\begin{lemma}\label{lemma:superline}
Let $(G,\leq_{A},\leq_{B})$ be an ordered bipartite graph such that both vertex classes of $G$ have size $n$, and let $H$ be the super-line graph of $(G,\leq_{A},\leq_{B})$. Then $H$ has no independent set of size larger than $2n$. Also, if $G$ has girth at least $2g$, then the girth of $H$ is at least $g+1$. In particular, if $G$ is $K_{2,2}$-free, then $H$ contains no triangles.
\end{lemma}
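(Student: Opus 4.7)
The lemma has three assertions; I treat them in order.

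For the independence bound $\alpha(H) \leq 2n$, I induct on $|A|$ and prove the slightly more general statement $\alpha(H) \leq |A| + |B|$ for arbitrary $|B|$. The base case $|A| = 1$ is immediate because $H$ then has no edges. For the inductive step, let $I \subseteq E(G)$ be independent in $H$, set $a^* = \min_{\leq_A} A$ and $C = \{b : (a^*, b) \in I\}$, and (if $C \neq \emptyset$) let $b^* = \min_{\leq_B} C$. The key observation is that for every $b \in C$ with $b^* <_B b$, no edge of $I$ other than $(a^*, b)$ can be incident to $b$: an offending $(a', b) \in I$ with $a' \neq a^*$ would satisfy $a^* <_A a'$, $b^* <_B b$, and $\{a^*, b\} \in E(G)$ (because $b \in C \subseteq N_G(a^*)$), making $(a^*, b^*)$ and $(a', b)$ adjacent in $H$. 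Hence $I \setminus \{(a^*, b) : b \in C\}$ is an independent set in the super-line graph of the restriction of $G$ to $(A \setminus \{a^*\}) \cup (B \setminus (C \setminus \{b^*\}))$, and applying induction yields $|I| \leq |C| + (|A| - 1) + (|B| - |C| + 1) = |A| + |B|$. Specializing to $|A| = |B| = n$ gives $\alpha(H) \leq 2n$.

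For the girth statement, suppose $H$ contains a cycle and let $e_1, \ldots, e_k$ (indices mod $k$) with $e_\ell = (a_\ell, b_\ell)$ be a shortest one. Each consecutive adjacency $e_\ell e_{\ell+1}$ in $H$ provides a cross-edge $c_\ell \in E(G)$ joining the smaller of $a_\ell, a_{\ell+1}$ to the larger of $b_\ell, b_{\ell+1}$. Writing $t_\ell \in \{+,-\}$ for the sign of $a_{\ell+1} - a_\ell$ (which, by the adjacency rule, agrees with the sign of $b_{\ell+1} - b_\ell$), the cyclic identity $\sum_\ell (a_{\ell+1} - a_\ell) = 0$ forces the sequence $(t_\ell)$ to have at least two sign changes. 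I then build a closed walk $W$ in $G$ that threads through each $e_\ell$ by entering at the endpoint of $e_\ell$ shared with $c_{\ell-1}$ and leaving via the endpoint shared with $c_\ell$. A short case analysis on $(t_{\ell-1}, t_\ell)$ shows that these two endpoints of $e_\ell$ coincide precisely when $t_{\ell-1} \neq t_\ell$, in which case $W$ pivots at the shared vertex and does not traverse $e_\ell$; otherwise $W$ uses $e_\ell$. Hence $W$ uses all $k$ cross-edges but at most $k - 2$ of the $e_\ell$'s, so $|W| \leq 2k - 2$. Invoking the minimality of the chosen $H$-cycle to rule out non-trivial coincidences among the edges of $W$, its trace is a subgraph of $G$ with more edges than vertices, and so contains a cycle of $G$ of length at most $2k - 2$. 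Thus if the girth of $G$ is at least $2g$, then $k \geq g + 1$, i.e., the girth of $H$ is at least $g + 1$.

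The final assertion is immediate: a bipartite graph is $K_{2,2}$-free if and only if it has girth at least $6$, so the girth statement with $g = 3$ gives that $H$ has girth at least $4$, i.e., is triangle-free.

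The main obstacle is the walk construction in the girth transfer: the case analysis identifying which $e_\ell$ get pivoted must be done carefully, and one needs the shortest-cycle hypothesis to exclude degenerate coincidences among the $c_\ell$'s and $e_\ell$'s that would prevent the trace of $W$ from containing an honest cycle of $G$.
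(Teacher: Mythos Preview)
The paper does not prove this lemma at all; it simply cites \cite{AP01} and \cite{ST21}. So there is no paper argument to compare against, and your attempt stands on its own.

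Your proof of the independence bound is correct and clean. The induction on $|A|$ with the observation that every $b\in C\setminus\{b^*\}$ is incident to only one edge of $I$ is exactly what is needed, and the bookkeeping $|I|\le |C|+(|A|-1)+(|B|-|C|+1)=|A|+|B|$ is right.

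The girth argument, however, has a genuine gap, and in fact the general girth statement as written in the lemma is \emph{false} for $g\ge 4$. Your hand-wave ``invoking the minimality of the chosen $H$-cycle to rule out non-trivial coincidences'' does not work: coincidences among the cross-edges $c_\ell$ can occur even in a shortest $H$-cycle, and they can collapse $W$ completely. Concretely, take $A=B=\{1,2,3\}$ with the natural orders and let $G$ have edge set $\{(1,1),(1,2),(1,3),(2,3),(3,3)\}$. Then $G$ is a tree, so its girth is infinite, yet in $H$ the four edges $e_1=(1,1)$, $e_2=(2,3)$, $e_3=(1,2)$, $e_4=(3,3)$ form a $4$-cycle (each adjacency uses the same cross-edge $(1,3)$). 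Thus ``$G$ has girth $\ge 8$ $\Rightarrow$ $H$ has girth $\ge 5$'' fails. In your walk construction for this $4$-cycle all four $e_\ell$ are pivoted and all four $c_\ell$ equal $(1,3)$, so $W$ is a pure sequence of backtracks; no cycle of $G$ can be extracted.

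For $g=3$ your walk argument \emph{does} go through: with $k=3$ there are exactly two sign changes, the walk has length $4$, and one checks directly (using only the strict inequalities $a_\ell\ne a_{\ell+1}$, $b_\ell\ne b_{\ell+1}$) that the four edges $e_\ell,c_1,c_2,c_3$ are pairwise distinct, yielding an honest $C_4$ in $G$. So the final assertion ($K_{2,2}$-free $\Rightarrow$ triangle-free) is correctly established. But the middle assertion cannot be proved as stated; the statement in the paper appears to be in error, and the correct version in the cited references is presumably somewhat different.
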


Now we are ready to describe our construction.

\begin{proof}[Proof of Theorem \ref{thm:mainlower}]
Let $n_0=n/2$, then by Lemma \ref{lemma:construction}, there exist a set $A\subset \mathbb{R}^{2}$  of size $n_0$, and a set $B$ of $n_0$ open rectangles in $\mathbb{R}^2$ such that the following holds. Let $G=(A,B;E)$ be the incidence graph between $A$ and $B$, then $G$ is $K_{2,2}$-free, and 
$|E|=N\geq c_{0}n_0\log n_0/\log\log n_0\geq c n\log n/\log\log n$, where  $c>0$ is chosen appropriately.

Let $\leq_{A},\leq_{B}$ be arbitrary linear orders on $A$ and $B$, respectively, and let $H$ be the super-line graph of $(G,\leq_{A},\leq_{B})$. Then $H$ has $N$ vertices, $H$ is triangle-free, and it contains no independent set of size $n$. It remains to show that $H$ is semilinear of  complexity $t$ for some constant $t$, as then $R_{t}(3,n)\geq N\geq cn\log/\log\log n$.

If $R\in B$ is a rectangle, then $R$ corresponds to the point $(a,b,c,d)\in \mathbb{R}^4$, where $(a,b)$ is the bottom left corner of $R$, and $(c,d)$ is the top right corner. A point $(x,y)$ is contained in $R$ if and only if $$\phi(x,y,a,b,c,d):=\{a-x<0\}\wedge \{b-y<0\}\wedge \{x-c<0\}\wedge\{y-d<0\}=\mbox{T}.$$  
Every vertex $w\in V(H)$ corresponds to a point $(x,y,a,b,c,d,i,j)\in \mathbb{R}^{8}$, where $(x,y)\in A$, $(a,b,c,d)\in B$, $i\in [n_0]$ is the position of $(x,y)$ with respect to the ordering $\leq_{A}$, and $j\in [n_0]$ is the position of $(a,b,c,d)$ with respect to the ordering $\leq_{B}$. But then, $(x,y,a,b,c,d,i,j),(x',y',a',b',c',d',i',j')\in V(H)$ are joined by an edge if and only if 
\begin{align*}
    &[\{i-i'<0\}\wedge \{j-j'<0\}\wedge \phi(x,y,a',b',c',d')]\vee\\
    &[\{i'-i<0\}\wedge\{j'-j<0\}\wedge \phi(x',y',a,b,c,d)]
\end{align*}
is true. Therefore, $H$ is semilinear of complexity 10.
\end{proof}

Next, we prove Theorem \ref{thm:lower2}. In particular, we show the extension of Lemma \ref{lemma:construction} that there are incidence graphs $G$ of $n$ points and $n$ rectangles, such that the girth of $G$ is at least $2g$, and $G$ has $\Omega_{g}(n\log\log n)$ edges. This then immediately implies Theorem \ref{thm:lower2} following the same proof as before. We omit the argument showing the implication, as it is straightforward. Let us remark that Davies \cite{D20} proved the existence of intersection graphs $G$ of $n$ boxes in $\mathbb{R}^3$, such that $G$ has girth at least $g$ and $|E(G)|\geq n\alpha_g(n)$, where $\alpha_g(n)$  tends to infinity extremely slowly as a function of $n$ compared to $\log\log n$. Note that an incidence graph of points and rectangles is also an intersection graph of boxes in $\mathbb{R}^3$: replace each point $(x,y)$ with a box $Q\times (0,1)$, where $Q$ is a small rectangle containing $(x,y)$, $Q$ intersects only those rectangles that contain $(x,y)$, and disjoint from all other rectangles $Q'$. Also, replace each rectangle $R$ with the box $R\times (x-\epsilon,x+\epsilon)$, where the intervals $(x-\epsilon,x+\epsilon)$ are pairwise disjoint and are contained in $(0,1)$.

\begin{theorem}\label{thm:girthedges}
Let $g$ be a positive integer, then there exists $c=c(g)>0$ such that the following holds.  Let $n$ be a positive integer, then there exists a graph $G$ such that $G$ is the incidence graph of $n$ points and $n$ open rectangles in $\mathbb{R}^2$, $G$ has girth at least $g$, and $G$ has at least $cn\log\log n$ edges.
\end{theorem}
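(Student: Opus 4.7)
The plan is to prove Theorem~\ref{thm:girthedges} by the Erdős alteration method applied to the Basit et al. incidence graph supplied by Lemma~\ref{lemma:construction}. I would apply that lemma at some parameter $m$ (chosen as a polylogarithmic-in-$n$ multiple of $n$, depending on $g$) to obtain a $K_{2,2}$-free incidence graph $G_0$ of $m$ points and $m$ rectangles with $e_0 \geq c_0 m \log m/\log\log m$ edges, and then, by iteratively deleting vertices of below-average degree, pass to a near-regular sub-incidence graph in which every vertex has degree $\Theta(d)$ for $d := \log m/\log\log m$. This costs only a constant factor in the edge count.

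I would then sub-sample vertices, keeping each point and each rectangle independently with probability $p$ (to be determined) and letting $G_p$ be the induced incidence graph on the survivors. Since sampling acts on vertices and not on edges, $G_p$ is genuinely a point-rectangle incidence graph. By linearity, $G_p$ has $pm$ points and $pm$ rectangles in expectation, $p^2 e_0$ edges, and $p^{\ell} c_\ell(G_0)$ cycles of length $\ell$. The crux of the proof is a bound on $c_\ell(G_0)$ for each even $\ell$ with $6 \leq \ell < g$ (the case $\ell=4$ is ruled out by $K_{2,2}$-freeness). Here one exploits $K_{2,2}$-freeness directly: any two same-side vertices of $G_0$ share at most one common neighbor, so an $\ell$-cycle is determined by an ordered path of length $\ell-2$ in $G_0$ (both endpoints lying on the same side) together with at most one further vertex closing it. Combined with near-regularity this yields
$$c_\ell(G_0) \ \leq \ O_\ell\bigl(m \, d^{\,\ell-2}\bigr).$$

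With the cycle bound in hand, the expected number of short cycles in $G_p$ is at most $\sum_{6 \leq \ell < g} O_\ell(p^{\ell} m d^{\ell-2})$, which can be made $\leq \tfrac{1}{4} p^2 e_0$ by choosing $p$ so that $pd$ is a sufficiently small constant power of $d$ — concretely $pd = \Theta_g(d^{1/(g-4)})$. A standard Markov/union argument then gives a realization of $G_p$ with $\Theta(pm)$ points and rectangles, at least $\tfrac{1}{2} p^2 e_0$ edges, and at most $\tfrac{1}{4} p^2 e_0$ cycles of length less than $g$. Deleting one vertex from each short cycle increases the girth to at least $g$ while preserving the incidence representation, and still leaves $\Omega(p^2 e_0)$ edges. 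Choosing $m$ so that $pm = n$ and padding with isolated points/rectangles if necessary produces an incidence graph of exactly $n$ points and $n$ rectangles of girth at least $g$ with $\Omega_g\bigl(n\,d^{1/(g-4)}\bigr) = \Omega_g\bigl(n\,(\log n/\log\log n)^{1/(g-4)}\bigr)$ edges, which is comfortably more than the claimed $\Omega_g(n \log\log n)$.

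The only step that is not pure bookkeeping is the short-cycle bound $c_\ell(G_0) \leq O_\ell(m\, d^{\ell-2})$, which is where I expect the main obstacle to lie; however, this bound is essentially forced by the structural content of $K_{2,2}$-freeness, namely the uniqueness of the closing common neighbor, so I expect it to go through as sketched above, with the remainder being a routine deletion argument that parallels the proof of Theorem~\ref{thm:mainlower}.
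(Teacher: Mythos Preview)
Your approach is genuinely different from the paper's. The paper does not sample an existing $K_{2,2}$-free configuration at all; instead it builds the high-girth incidence graph from scratch by iterating a product-type operation $G\mapsto G\otimes_H k$ (placing $k$ translated copies of the current point--rectangle configuration and threading one long thin rectangle through each family of corresponding points), maintaining girth $\geq 2g$ at every stage by choosing the auxiliary bipartite graph $H$ randomly so as to avoid certain bad cycles. If your alteration argument worked, it would in fact give the stronger bound $\Omega_g\bigl(n(\log n/\log\log n)^{c_g}\bigr)$ rather than merely $\Omega_g(n\log\log n)$.

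There is, however, a real gap. Iteratively deleting vertices of below-average degree yields a subgraph of \emph{minimum} degree $\Omega(d)$, but gives no control whatsoever on the \emph{maximum} degree --- and your cycle bound $c_\ell(G_0)=O_\ell(m\,d^{\,\ell-2})$ rests entirely on a max-degree bound: the path of length $\ell-2$ is counted as a starting vertex followed by $\ell-2$ extension steps of at most $\Delta$ choices each. A $K_{2,2}$-free bipartite graph on $m+m$ vertices with average degree $d$ can have vertices of degree far exceeding $d$, and Lemma~\ref{lemma:construction} as a black box says nothing about degrees. A second issue of the same flavor arises at the deletion step: since you (correctly) delete a \emph{vertex}, not an edge, from each surviving short cycle in order to keep an honest incidence graph, each deletion may cost up to $\Delta(G_p)$ edges, so the inequality you actually need is $(\text{\# short cycles})\cdot\Delta(G_p)\ll |E(G_p)|$, not merely $(\text{\# short cycles})\ll |E(G_p)|$. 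Both problems are repairable --- the explicit graph $((K_{1,k}\otimes k)\otimes\cdots)\otimes k$ underlying Lemma~\ref{lemma:construction} does have maximum degree $O(k)=O(d)$, so opening that box (or, alternatively, a dyadic pigeonholing on degrees at polylogarithmic cost) would let your argument proceed --- but as written the proof does not go through.
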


Our proof is based on the argument in \cite{BCSTT20} of the proof of Lemma \ref{lemma:construction}, but we introduce a couple of new ideas as well. We prepare the proof with a few lemmas. Let $G=(A,B;E)$ be a bipartite graph and let $k$ be a positive integer. Define the bipartite graph $G\otimes k$ as follows: 
\begin{itemize}
    \item the vertex classes of $G$ are $A\times [k]$ and $(B\times [k])\cup A'$, where $A'$ is a copy of $A$,
    \item the vertices  $(u,i)\in A\times [k]$ and $(v,j)\in B\times [k]$ are joined by an edge if $i=j$ and $\{u,v\}\in E$,
    \item the vertices  $(u,i)\in A\times [k]$ and $v\in A'$ are joined by an edge if $u=v$.
\end{itemize}
Note that the vertex classes of $G\otimes k$ have sizes $|A|k$ and $|B|k+|A|$, and it has $|E|k+|A|k$ edges.

\begin{lemma}\label{lemma:realize}
Let $G$ be the incidence graph of points and open rectangles, and let $k$ be a positive integer. Then $G\otimes k$ can be realized as the incidence graph of points and open rectangles.
\end{lemma}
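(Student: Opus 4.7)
The plan is to take $k$ vertical translates of the given realization of $G$, placed far enough apart so that incidences within each translate do not interact, and then to adjoin one tall, thin, vertical rectangle for every point $u\in A$ that picks out exactly the $k$ copies of $u$. The first step will be a standard general-position reduction: I will first shrink every rectangle in $B$ uniformly by some $\delta>0$ (so that each incidence $p\in R$ retains a $\delta$-margin to the boundary), then perturb the $x$-coordinates of the points in $A$ by less than $\delta/2$. This preserves all incidences and non-incidences, and lets me assume that the points of $A$ have pairwise distinct $x$-coordinates. After translating and scaling, I may further assume that the entire configuration lies in $(-L,L)^{2}$ for some $L>0$.

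Next, I will fix $M>10L$ and define the translations $\tau_i:(x,y)\mapsto (x,y+iM)$ for $i\in[k]$. The realization of $G\otimes k$ will consist of the points $\widetilde{A}=\{\tau_i(u):u\in A,\ i\in[k]\}$, where $\tau_i(u)$ represents the vertex $(u,i)\in A\times[k]$, together with the rectangles $\widetilde{B}=\{\tau_i(R):R\in B,\ i\in[k]\}$, where $\tau_i(R)$ represents $(R,i)\in B\times[k]$, plus one additional rectangle $R_u$ for each $u\in A$, representing the copy $u\in A'$. Since $\tau_j(R)$ has $y$-extent inside $(jM-L,jM+L)$ while $\tau_i(u)$ has $y$-coordinate in $(iM-L,iM+L)$, and these intervals are disjoint whenever $i\ne j$ (as $M>2L$), the point $\tau_i(u)$ will lie in $\tau_j(R)$ exactly when $i=j$ and $u\in R$. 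This matches the first type of edge in $G\otimes k$.

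Finally, for each $u\in A$ I will set $\epsilon=\tfrac{1}{2}\min\{|x(u)-x(u')|:u'\in A\setminus\{u\}\}$, which is positive by distinctness of the $x$-coordinates, and take
$$R_u=\bigl(x(u)-\epsilon,\ x(u)+\epsilon\bigr)\times\bigl(-2L,\ kM+2L\bigr).$$
By construction $R_u$ contains $\tau_i(u)=(x(u),\ y(u)+iM)$ for every $i\in[k]$, while for any $u'\ne u$ and $j\in[k]$ the point $\tau_j(u')$ has $x$-coordinate $x(u')\notin(x(u)-\epsilon,x(u)+\epsilon)$, so $\tau_j(u')\notin R_u$. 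Hence the incidences involving the new rectangles $R_u$ are precisely the second type of edge in $G\otimes k$, finishing the construction. I do not expect any serious obstacle: the only mildly delicate step is the initial general-position reduction, and even there the argument is routine because every containment and non-containment is governed by finitely many strict linear inequalities that survive a sufficiently small perturbation.
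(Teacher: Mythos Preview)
Your proposal is correct and is essentially the same construction as the paper's: after a general-position perturbation so that the points have distinct $x$-coordinates, take $k$ vertical translates of the configuration and adjoin one tall thin vertical rectangle per point to collect its $k$ copies. The only difference is cosmetic---you spell out the perturbation step via a preliminary $\delta$-shrinking of the rectangles, whereas the paper simply asserts that a slight perturbation suffices.
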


\begin{proof}
Let $X\subset \mathbb{R}^2$ be a set of points and $\mathcal{R}$ be a set of open rectangles whose incidence graph is $G$. After shifting and scaling, we may assume that each point and rectangle is contained in the open square $(0,1)^2$. Also, after slight perturbation, we can assume that no two points of $X$ share a coordinate. For $i\in [k]$, let $X_i$ and $\mathcal{R}_i$ be the copies of $X$ and $\mathcal{R}$ shifted by the vector $(0,i)$. Let $Y=X_1\cup\dots\cup X_k$. For each $x\in X$, we can find a thin rectangle $R_x$ that contains only the points $x_1,\dots,x_k$ from $Y$, where $x_i$ is the copy of $x$ in $X_i$. Let $\mathcal{Q}=\mathcal{R}_1\cup\dots\cup\mathcal{R}_k\cup\{R_x:x\in X\}$. Then the incidence graph of $Y$ and $\mathcal{Q}$ is isomorphic to $G\otimes k$.
\end{proof}

If $G$ is $K_{2,2}$-free, then $G\otimes k$ is $K_{2,2}$-free as well. In \cite{BCSTT20}, it is proved that if $k\approx \log n/\log \log n$, then the graph $((K_{1,k}\otimes k)\otimes\dots)\otimes k$ satisfies Lemma \ref{lemma:construction}, where $\otimes k$ is repeated $k$ times. We remark that this graph is $C_6$-free as well. However, if $k\geq 2$ and $G=(A,B;E)$ such that $B$ contains a vertex with degree at least $2$, then $G\otimes k$ contains cycles of length $8$. We show that we can sparsify $G\otimes k$ to avoid short cycles.

Let $H$ be a bipartite graph with vertex classes $A$ and $[k]$. Let $G\otimes_{H} k$ denote the induced subgraph of $G\otimes k$ in which we keep only those vertices $(u,i)\in A\times [k]$ for which $\{u,i\}\in E(H)$.

\begin{lemma}\label{lemma:girth}
Let $G=(A,B;E)$ be a bipartite graph with girth at least $2g$, and let $H$ be a bipartite graph with vertex classes $A$ and $[k]$. Suppose that $H$ contains no cycle $u_1,\ell_1,\dots,u_h,\ell_h$ with $u_1,\dots,u_h\in A$, $\ell_1,\dots \ell_h\in [k]$ for some $h<g$ such that $u_1,\dots,u_h$ have a common neighbor in $G$. Then $G\otimes_{H} k$ has girth at least $2g$.
\end{lemma}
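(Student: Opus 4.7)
The plan is to argue by contradiction: assume $G\otimes_H k$ contains a cycle $C$ of length less than $2g$, and extract from $C$ a cycle in $H$ that is forbidden by hypothesis.

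I would begin with a structural decomposition of $C$. Every vertex of $G\otimes_H k$ lies in $A\times[k]$, $B\times[k]$, or in the copy $A'$, and edges either stay within a layer or are lift edges, so $C$ alternates between $A\times[k]$ and $(B\times[k])\cup A'$. List the images in $A$ of the $A'$-visits of $C$ cyclically as $v_1,\dots,v_r$ (distinct by simplicity of $C$); between consecutive visits at $v_j$ and $v_{j+1}$ the cycle lives in a single layer $\ell_j\in[k]$ and produces a simple path $P_j$ in $G$ from $v_j$ to $v_{j+1}$ of even length $2s_j$. The two distinct lift edges at $v_j$ force $\ell_{j-1}\neq\ell_j$, so $r=1$ is impossible, and $r=0$ yields a cycle of $G$ of length less than $2g$, contradicting the girth assumption. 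Hence $r\geq 2$, and the identity $|C|=2r+2L<2g$ with $L:=\sum_j s_j$ gives $L+r<g$, and in particular $r<g$.

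I would then analyze the closed walk $W:=P_1P_2\cdots P_r$ in $G$. It has length $2L<2g$, so its underlying subgraph is acyclic, hence a tree $T$; since $T=\bigcup_j P_j$, the tree $T$ is in fact the Steiner tree of $\{v_1,\dots,v_r\}$, so every leaf of $T$ is one of the $v_j$'s, and consequently every $B$-vertex of $T$ has $T$-degree at least $2$. Being a closed walk on a tree, $W$ traverses each tree-edge the same number of times in each direction, hence an even number of times, at least $2$.

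The crux is to use a single $B$-vertex $b\in T$ to produce a forbidden cycle in $H$. Let $u_1,\dots,u_d$ be the $T$-neighbors of $b$ (all in $A$), $d\geq 2$, and let $p$ be the number of segments $P_j$ passing through $b$; since $b\in B$ is never an endpoint, each such segment uses two of the $d$ tree-edges at $b$, and together with the lower bound of $2$ traversals per tree-edge, counting yields $p\geq d$. A key observation is that any two segments through $b$ must have distinct layers, since otherwise the vertex $(b,\ell_i)$ would be visited twice in the cycle $C$. I would then define a bipartite subgraph $M$ of $H$ on the neighbour set $\{u_1,\dots,u_d\}$ and the $p$ distinct layers $\lambda_1,\dots,\lambda_p$ of segments through $b$: for each segment $P_j$ passing through $b$ with layer $\lambda_j$ and using the pair of tree-edges $\{u_{a(j)},b\},\{u_{a'(j)},b\}$, add the two edges $\{u_{a(j)},\lambda_j\},\{u_{a'(j)},\lambda_j\}$ of $H$. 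Distinct segments give distinct layers, so these $2p$ edges are distinct; since $p\geq d$, the bipartite graph $M$ has more than $p+d-1$ edges and therefore contains a cycle of some even length $2h$ with $h\leq\min(p,d)\leq r<g$. This cycle is a cycle in $H$ whose $A$-vertices are all $T$-neighbors of $b$ in $G$, so they share the common neighbour $b$, contradicting the hypothesis on $H$. The main obstacle I anticipate is this third step: the proof must simultaneously establish the traversal count $p\geq d$, the layer distinctness that makes the $2p$ edges of $M$ distinct, the non-forest property of $M$, and the length bound $h<g$ for the cycle to be forbidden by hypothesis.
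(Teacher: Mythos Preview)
Your proposal is correct and follows essentially the same route as the paper's proof. Both arguments project the short cycle $C$ to a closed walk $W$ in $G$, use the girth assumption to conclude that the edges of $W$ form a tree $T$, pick a $B$-vertex of $T$, and extract a short cycle in $H$ among its $G$-neighbours. The only cosmetic difference is the auxiliary graph at the chosen $B$-vertex: the paper forms a multigraph $J$ on the neighbour set $Z$ with one edge per pass of $W$ through $v$ (and shows $\delta(J)\ge 2$), whereas you form the bipartite graph $M$ between the neighbours and the layers and count edges; these two objects encode the same data, since each layer vertex in your $M$ has degree exactly $2$ and corresponds to a single (labelled) edge of the paper's $J$.
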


\begin{proof}
Suppose that $w_1,\dots,w_{2p}$ is a cycle $C$ in $G\otimes_{H} k$ for some $p<g$, where $w_1\in A\times [k]$. Start walking on the vertices $w_1,\dots,w_{2p}$. If we are currently on $w_i$, and $w_i$ is an element of $A\times [k]$ or $B\times [k]$, then write down the first coordinate of $w_i$ and move to $w_{i+1}$. In case $w_i\in A'$, then skip $w_i$ and $w_{i+1}$, and move to $w_{i+2}$. Note that in this case the first coordinate of $w_{i-1}$ is the same as of $w_{i+1}$. Then we wrote down some sequence $u_1,v_1,\dots,u_{q},v_{q}$ for some $q\leq p$, where $u_i\in A$, $v_i\in B$ for $i\in [q]$, and $u_1,v_1,\dots,u_{q},v_{q}$ is a closed walk $W$ in $G$. Let $T$ be the graph consisting of the edges of this walk. As $G$ has girth at least $2g$, $T$ is a tree. Let $v=v_1$, and let $Z$ be the set of neighbors of $v$ in $T$. Put as many edges between $z,z'\in Z$ as many times $zvz'$ is a subwalk of $W$, and let $J$ be the resulting multigraph. Note that for each edge $\{z,z'\}\in E(J)$, there exists $\ell\in [k]$ such that $\{z,\ell\},\{z',\ell\}\in E(H)$.  Note that each edge $\{v,z\}$ is visited at least twice by $W$, so the minimum degree of $J$ is at least $2$. Therefore, $J$ contains a cycle $z_1,\dots,z_r$ for some $r\leq q$. But then there exists $\ell_1,\dots,\ell_r\in G$ such that $\{z_i,\ell_i\},\{z_i,\ell_{i+1}\}\in E(H)$ for $i\in [r]$, where indices are meant modulo $r$. Note that $\ell_i\neq \ell_j$ for $1\leq i<j\leq r$, because the unique neighbor of the vertices $(z_{i-1},\ell_i)$ and $(z_{i},\ell_i)$ in $G\otimes_{H} k$ is $(v,\ell_i)$, so $(v,\ell_i)$ would appear twice in the cycle $C$ otherwise. But then $z_1,\ell_1,\dots,z_r,\ell_r$ is a cycle of length $2r<2g$ in $H$, and $v$ is a common neighbor of $z_1,\dots,z_r$, which contradicts the condition on $H$.
\end{proof}

\begin{lemma}\label{lemma:findH}
Let $G=(A,B;E)$ be a bipartite graph such that every vertex in $B$ has degree at most $k$, $(100\log k)^{2g^2}|B|\leq |A|\leq k|B|$, and $G$ has girth at least $2g$. Then there exists a bipartite graph $H$ on vertex classes $A$ and $[k]$ such that $|E(H)|= \frac{1}{4}|A|^{1+1/2g}/|B|^{1/2g}$,  $G\otimes_{H} k$ has girth at least $2g$, and the maximum degree of every vertex in $(B\times [k])\cup A'$ is at most $2(|A|/|B|)^{1/2g}.$
\end{lemma}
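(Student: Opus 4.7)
The plan is a standard first moment plus deletion argument. Set $d:=(|A|/|B|)^{1/2g}$ and $p:=d/k$; the inequality $|A|\le k|B|$ guarantees $p\le 1$. Let $H_0$ be the random bipartite graph on $A\sqcup[k]$ in which each potential edge is kept independently with probability $p$. Then $\mathbb{E}[|E(H_0)|]=|A|kp=|A|d$.

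I will bound the expectations of three "bad" quantities. First, for each $v\in B$ and each $2\le h\le g-1$, the expected number of cycles $u_1\ell_1u_2\ell_2\cdots u_h\ell_h$ in $H_0$ whose $A$-vertices all lie in $N_G(v)$ is at most $(\deg_G v)^h k^h p^{2h}\le d^{2h}$. Summing over $v$ and $h$, and using the identity $d^{2g}=|A|/|B|$, the total number of such forbidden cycles is at most $g|B|d^{2(g-1)}=g(|B|/|A|)^{3/(2g)}\cdot|A|d$; the hypothesis $|A|/|B|\ge(100\log k)^{2g^2}$ then forces $g(|B|/|A|)^{3/(2g)}\le g(100\log k)^{-3g}\le 1/10$. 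Second and third: for each $u\in A$ the degree $\deg_{H_0}(u)$ is $\mathrm{Bin}(k,p)$ with mean $d$, and for each $(v,i)\in B\times[k]$ the degree $|N_G(v)\cap N_{H_0}(i)|$ of $(v,i)$ in $G\otimes_{H_0}k$ is binomial with mean at most $d$. A Chernoff bound $\Pr[X\ge 2d+j]\le e^{-(d+j)/3}$ for $j\ge 0$ yields $\mathbb{E}[\max(X-2d,0)]=O(e^{-d/3})$ per vertex; summing over the $|A|+|B|k\le 2|A|k$ relevant vertices and using $d\ge(100\log k)^g\ge 100\log k$ makes the combined expected excess at most $|A|d/10$.

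Form $H$ from $H_0$ by (a) removing one edge from each forbidden cycle, and (b) for each vertex of $(B\times[k])\cup A'$ whose degree in $G\otimes_{H_0}k$ exceeds $2d$, removing enough incident edges of $H_0$ to bring that degree down to $2d$. Edge removal from $H$ only eliminates cycles of $H$ and only removes vertices from $G\otimes_H k$, so both the forbidden-cycle hypothesis of Lemma~\ref{lemma:girth} and the degree bound are preserved under further deletions. The expected number of surviving edges is at least $|A|d-|A|d/10-|A|d/10\ge |A|d/2$, so a fixed realization produces some $H$ with $|E(H)|\ge|A|d/2$; arbitrarily deleting more edges brings the count down to exactly $\tfrac14|A|d=\tfrac14|A|^{1+1/2g}/|B|^{1/2g}$.

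The delicate point is the calibration of the hypothesis $|A|/|B|\ge (100\log k)^{2g^2}$. The exponent $2g^2$ is essentially forced by the requirement that the dominant bad-cycle count $|B|d^{2(g-1)}$ be a constant factor smaller than the edge count $|A|d$, while the extra $100\log k$ slack is forced by the requirement that $d=(|A|/|B|)^{1/2g}$ be large enough for the Chernoff tail $e^{-d/3}$ to defeat the union bound over the $\Theta(|B|k)$ vertices of $B\times[k]$; both conditions are met simultaneously by the stated hypothesis, which is why the two seemingly unrelated thresholds combine into a single clean assumption.
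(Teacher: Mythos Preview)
Your argument is correct and follows essentially the same route as the paper: choose $H_0$ at random with edge probability $p\approx (|A|/|B|)^{1/2g}/k$, bound the expected number of ``bad'' cycles via Lemma~\ref{lemma:girth}, control high-degree vertices in $(B\times[k])\cup A'$ by Chernoff, and delete. The only cosmetic differences are that the paper takes $p$ twice as large and, instead of trimming each high-degree vertex down to $2d$, simply deletes \emph{every} edge of $H_0$ incident (through $G\otimes_{H_0}k$) to a bad vertex; your trimming variant is slightly more economical but the calculation is the same in spirit.
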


\begin{proof}
Select each element of $A\times [k]$ with probability $p=\frac{2}{k}(\frac{|A|}{|B|})^{1/(2g)}$, and let $H_0$ be the resulting graph. Let $N$ be the number of edges of $H_0$, then $\mathbb{E}(N)= pk|A|$. Say that a cycle $u_1,\ell_1,\dots,u_h,\ell_h$ in $H_0$ is \emph{bad} if $u_1,\dots,u_h\in A$, $\ell_1,\dots \ell_h\in [k]$, and $u_1,\dots,u_h$ have a common neighbor in $G$.  For $h< g$, let $X_h$ be the number of bad cycles of length $2h$.  Then $$\mathbb{E}(X_h)\leq k^{2h}p^{2h}|B|,$$
as for every $v\in B$, there are at most $|N_{G}(v)|^{h}k^{h}\leq k^{2h}$ ways to choose $u_1,\ell_1,\dots,u_h,\ell_h$ such that $u_1,\dots,u_h\in N_{G}(v)$, and $\ell_1,\dots \ell_h\in [k]$, and the probability that the vertices $u_1,\ell_1,\dots,u_h,\ell_h$ form a cycle is $p^{2h}$. Also, let $X=\sum_{h=2}^{g-1}X_{h}$, then $$\mathbb{E}(X)\leq g|B|k^{2g-2}p^{2g-2}<\frac{pk}{4}|A|,$$ 
where the last inequality holds by the assumption $(100\log k)^{2g^2}|B|\leq |A|$.

Say that a vertex $w\in(B\times [k])\cup A'$ is \emph{bad} if its degree in $G\otimes_{H_0} k$ is more than $2pk$. Note that if $d$ is the degree of $w$, then $d$ is the sum of at most $k$ indicator random variables of probability $p$. Therefore, $\mathbb{E}(d)\leq pk$, and by the multiplicative form of Chernoff's bound, we have 
$$\mathbb{P}(w\emph{ is bad})=\mathbb{P}(d\geq 2pk)\leq e^{-\frac{pk}{3}}\leq \frac{1}{8k^2},$$
where the last inequality holds by the assumption $(100\log k)^{2g^2}|B|\leq |A|$.

Say that an edge $\{u,\ell\}\in E(H_{0})$ is bad if $(u,\ell)$ is a neighbor of a bad vertex in $G\otimes_{H_0} k$. Let $Y$ be the number of bad edges of $H_0$, then $$\mathbb{E}(Y)\leq k\cdot \frac{1}{8k^2}\cdot(|B|k+|A|)\leq \frac{|B|}{8}+\frac{|A|}{k}<\frac{pk}{4}|A|.$$
Let $H$ be the subgraph of $H_0$ we get after deleting every bad edge, and deleting one edge of every bad cycle of length less than $2g$. Let $M$ be the number of edges of $H$. Then
$$\mathbb{E}(M)\geq \mathbb{E}(N-X-Y)\geq \frac{pk}{2}|A|.$$
Hence, there is a choice for $H_0$ such that $H$ has at least $\frac{pk}{2}|A| =|A|^{1+1/2g}/|B|^{1/2g}$ edges. Note that $H$ contains no bad cycles of length less than $2g$, so $G\otimes_{H} k$ has girth at least $2g$ by Lemma \ref{lemma:girth}. Also every vertex in $(B\times [k])\cup A'$ has degree at most $2pk= 4(|A|/|B|)^{1/2g}$. Hence, $H$ satisfies the desired properties.
\end{proof}

Say that the tuple $(a,b,k,d,g)$ is \emph{realizable}, if there exists a bipartite graph $G=(A,B;E)$ such that $G$ is the incidence graph of points and open rectangles, $|A|=a$, $|B|=b$, every vertex in $B$ has degree at most $k$, every vertex in $A$ has degree exactly $d$, and $G$ has girth at least $2g$. In this case, say that $G$ \emph{realizes} $(a,b,k,d,g)$. Note that if $(a,b,k,d,g)$ is realizable for some $d\geq 1$, then $a\leq bk$. Indeed, if $G=(A,B;E)$ is a bipartite graph realizing $(a,b,k,d,g)$, then every vertex in $A$ has degree at least 1, and every vertex in $B$ has degree at most $k$, so $|A|\leq k|B|$.

\begin{lemma}\label{lemma:stepup}
Let $a,b,k,d,g$ be positive integers such that $(a,b,k,d,g)$ is realizable, and $(100\log k)^{2g^2}b\leq a$. Then 
$$\left(\frac{a^{1+1/2g}}{b^{1/2g}},2kb,
4\left(\frac{a}{b}\right)^{1/2g},d+1,g\right)$$
is also realizable.
\end{lemma}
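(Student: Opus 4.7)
The plan is to combine Lemmas \ref{lemma:realize}, \ref{lemma:girth}, and \ref{lemma:findH}: apply Lemma \ref{lemma:findH} to our realizing graph $G$ in order to obtain an auxiliary bipartite graph $H$, then form the construction $G\otimes_H k$, and finally invoke Lemma \ref{lemma:realize} to transfer the geometric realization.

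First, I would let $G=(A,B;E)$ realize $(a,b,k,d,g)$ and verify the hypotheses of Lemma \ref{lemma:findH}. The lower bound $(100\log k)^{2g^2}b \leq a$ is given; the upper bound $a \leq kb$ follows from realizability (assuming $d\geq 1$, so that every $A$-vertex has a neighbor, whence $a \leq kb$ by double counting edges). Lemma \ref{lemma:findH} then yields a bipartite graph $H$ on vertex classes $A$ and $[k]$ of size $|E(H)| \geq \tfrac{1}{4} a^{1+1/2g}/b^{1/2g}$, such that $G\otimes_H k$ has girth at least $2g$ and every vertex in $(B\times[k])\cup A'$ has degree at most $4(a/b)^{1/2g}$ in $G\otimes_H k$ (the girth bound is where Lemma \ref{lemma:girth} enters, already invoked inside the proof of Lemma \ref{lemma:findH}).

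Next I would read off the parameters of $G':=G\otimes_H k$ and check they match the target tuple. The first vertex class consists of the surviving pairs $(u,i)\in A\times[k]$ with $\{u,i\}\in E(H)$, so it has size $|E(H)|$, which is of the required order $a^{1+1/2g}/b^{1/2g}$. Each such vertex $(u,i)$ is adjacent to $(v,i)$ for every $v\in N_G(u)$ and to the copy of $u$ in $A'$, hence has degree exactly $d+1$. The second vertex class $(B\times[k])\cup A'$ has size $bk+a \leq 2bk$ (again using $a\leq kb$), and Lemma \ref{lemma:findH} gives the maximum-degree bound $4(a/b)^{1/2g}$ on this side. Finally, Lemma \ref{lemma:realize} shows that $G\otimes k$ is realizable as the incidence graph of points and open rectangles; since $G\otimes_H k$ is obtained from $G\otimes k$ by deleting some vertices of $A\times[k]$, it is an induced subgraph of that incidence graph, hence also realizable as such. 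Combined with the above, this proves that $G'$ realizes the asserted tuple.

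The main obstacle is purely bookkeeping: keeping track of which quantity plays the role of $a,b,k,d$ in the new tuple, and absorbing the harmless constant-factor slack (the $\tfrac14$ in $|E(H)|$ and the $bk+a \leq 2bk$ estimate) into the ``omit the use of floors and ceilings'' convention declared in the introduction. No genuinely new idea beyond Lemmas \ref{lemma:realize}, \ref{lemma:girth}, \ref{lemma:findH} will be needed; the construction is essentially a single application of the random-sparsification procedure packaged by Lemma \ref{lemma:findH}.
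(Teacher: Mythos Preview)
Your proposal is correct and follows essentially the same approach as the paper's proof: take a graph $G$ realizing $(a,b,k,d,g)$, apply Lemma~\ref{lemma:findH} to obtain $H$, form $G\otimes_H k$, and invoke Lemma~\ref{lemma:realize} (plus the observation that an induced subgraph of an incidence graph is again an incidence graph) for the geometric realization. The only cosmetic addition in the paper is that it explicitly pads the $B$-side with isolated vertices to make its size exactly $2kb$; otherwise the two arguments coincide.
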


\begin{proof}
Let $G=(A,B;E)$ be a graph realizing $(a,b,k,d,g)$. Let $H$ be a bipartite graph with vertex classes $A$ and $[k]$, and let $G_0=G\otimes_{H} k$ with vertex classes $A_0$ and $B_0$.  Then $G_0$ is also an incidence graph of points and rectangles by Lemma \ref{lemma:realize}, $|A_0|=|E(H)|$, $|B_0|=a+kb\leq 2kb$, and every vertex in $A_0$ has degree $d+1$. Choose $H$ satisfying the conditions of  Lemma \ref{lemma:findH}, then $G_0$ has girth at least $2g$, $|A_0|=\frac{a^{1+1/2g}}{b^{1/2g}}$, and every vertex in $B_0$ has degree at most $4\left(\frac{a}{b}\right)^{1/2g}$. Add some isolated vertices to $B_0$ in order to make its size equal to $2bk$. Then, $G_0$ realizes $$\left(\frac{a^{1+1/2g}}{b^{1/2g}},2kb,
4\left(\frac{a}{b}\right)^{1/2g},d+1,g\right).$$

\end{proof}

\begin{proof}[Proof of Theorem \ref{thm:lower2}]
Clearly, the tuple $(a_1,b_1,k_1,d_1,g)=(m,1,m,1,g)$ is realizable, as the incidence graph of a  single rectangle containing $m$ points realizes it. For $i=1,2,\dots$, as long as $(100\log k_i)^{2g^2}b_i\leq a_i$ holds, define 
$$(a_{i+1},b_{i+1},k_{i+1},d_{i+1},g):=\left(\frac{a_i^{1+1/2g}}{b_i^{1/2g}},2k_i b_i,4\left(\frac{a_i}{b_i}\right)^{1/2g},d_i+1,g\right),$$
and let $I$ be the last index for which this tuple is defined. Then by Lemma \ref{lemma:stepup}, $(a_i,b_i,k_i,d_i,g)$ is realizable for $i\leq I$. Note that $d_i=i$, and $a_{i}\geq b_i$ for $i\leq I-1$. Let  $G=(A,B;E)$ be the bipartite graph realizing $(a_{I-1},b_{I-1},k_{I-1},d_{I-1},g)$. Remove some vertices from $A$ to make $A$ and $B$ have equal size $n$. Then $G$ is the incidence graph of points and rectangles, $G$ has girth at least $2g$, and every vertex in $A$ has degree equal to $I-1$, which implies that $G$ has $(I-1)n$ edges. Therefore, to finish the proof, it is enough to show that $I=\Omega_g(\log\log m)$ and $a_{I-1}\leq a_{I}=O_{g}(m^{2})$.

Let $\gamma_0=0$, and for $i=1,\dots,I$, let $$\gamma_{i}=\sum_{j=0}^{i-1}\left(\frac{1}{2g}\right)^{j}.$$ Clearly, $\gamma_i<2$. It is not difficult to calculate that $a_i=\alpha_i m^{\gamma_i},$ $b_i=\beta_i m^{\gamma_{i-1}}$ and $k_i=\kappa_i m^{1/(2g)^{i-1}}$, where $\alpha_i^{1/i},\beta_{i}^{1/i},\kappa_i^{1/i}=\Theta_g(1)$. Therefore, $(100\log k_I)^{2g^2}b_{I}> a_{I}$ implies $$(100\log m/(2g)^{I-1})^{2g^2}>\Theta_g(1)^{I}m^{1/(2g)^{I-1}}.$$ 
But then $I= \Theta_g(\log\log m)$ and $a_{I}=O_g(m^2)$.
\end{proof}

\section{Concluding remarks}

In this paper, we proved that $R_{t}(s,n)=O_{t}(1)\cdot s^{O_{t}(1)}\cdot n(\log n)^{O_t(1)}.$ Also, we showed that the logarithmic term cannot be omitted. 

\begin{itemize}
    \item Can we replace $(\log n)^{O_t(1)}$ with simply $\log n$? We suspect that the answer is no, and it would be interesting to see examples showing that a larger power of $\log n$ is indeed needed.
    \item What happens if we assume large girth? Are there semilinear graphs of bounded complexity with girth $g$, independence number $n$, and at least $n(\log n)^{\Omega_g(1)}$ vertices?
\end{itemize}

What can we say about the Ramsey properties of semilinear $r$-uniform hypergraphs for ${r\geq 3}$? (We defined semilinearity only for graphs, but the definition extends for hypergraphs in a straightforward way.)  It was proved by Conlon et al. \cite{CFPSS14} that if $\mathcal{H}$ is an $r$-uniform semialgebraic hypergraph on $N$ vertices of complexity $t$ which contains no clique or independent set of size $n$, then $N\leq \mbox{tw}_{r-1}(n^C)$ for some constant $C=C(t)$, where the \emph{tower function} $\mbox{tw}_k(x)$ is defined as $\mbox{tw}_1(x):=x$ and $\mbox{tw}_{k+1}(x):=2^{\mbox{tw}_{k}(x)}$. This bound is also the best possible up to the value of~$C$. Do the Ramsey numbers of $r$-uniform semilinear hypergraphs behave similarly?
\begin{itemize}
    \item Let $R^{r}_t(s,n)$ denote the smallest $N$ such that every $r$-uniform semilinear hypergraph of complexity $t$ on  $N$ vertices contains either a clique of size $s$, or an independent set of size $n$. What is the order of $R^{r}_t(s,n)$ when $s$ is a constant? Also, what is the order of $R^{r}_t(n,n)$?
\end{itemize}

\vspace{0.3cm}
\noindent	
{\bf Acknowledgements.} The author acknowledges the support of the SNSF grant 200021\_196965, the support of the Russian Government in the framework of MegaGrant no 075-15-2019-1926, and the support of MIPT Moscow.

\end{document}